 \newtheorem{theo}{Theorem}[section]
 \newtheorem{example}[theo]{Example}
  \newtheorem{lem}[theo]{Lemma}
\newtheorem{propos}[theo]{Proposition}
\newtheorem{cor}[theo]{Corollary}
\theoremstyle{definition}
 \newtheorem{definition}[theo]{Definition}
 \newtheorem{prob}[theo]{Problem}
\newcommand{\R}{{\mathbb{R}}}
\newcommand{\N}{{\mathbb{N}}}
\newcommand{\F}{\mathfrak{F}}
\newcommand{\eps}{\varepsilon}
\newcommand{\del}{\delta}
\newcommand{\diam}{\mathrm{diam}}
\newcommand{\bea}{\begin{eqnarray*}}
\newcommand{\eea}{\end{eqnarray*}}
\newcommand{\beq}{\begin{eqnarray}}
\newcommand{\eeq}{\end{eqnarray}}
\renewcommand{\leq}{\leqslant}
\renewcommand{\geq}{\geqslant}
\renewcommand{\le}{\leqslant}
\renewcommand{\ge}{\geqslant}
\numberwithin{equation}{section}
\begin{document}
\title[Plastic pairs]{Plastic pairs of metric spaces}

\author[Kadets]{V. Kadets}

\address[Kadets]{ \href{http://orcid.org/0000-0002-5606-2679}{ORCID: \texttt{0000-0002-5606-2679}} {Department of Mathematics Holon Institute of Technology (Israel) and School of Mathematics and Informatics, V.N.Karazin Kharkiv National University (Ukraine) }}
\email{vova1kadets@yahoo.com}

\author[Zavarzina]{O. Zavarzina}
\address[Zavarzina]{\href{http://orcid.org/0000-0002-5731-6343}{ORCID: \texttt{0000-0002-5731-6343}} School of Mathematics and Informatics, V.N. Karazin Kharkiv National University (Ukraine)}
\email{olesia.zavarzina@yahoo.com}

\subjclass[2020]{47H094; 54E40; 46B04}
\keywords{Expand-contract plastic metric space; noncontractive mapping; isometry; totally bounded metric space; unit ball}
\thanks{ On the early stage of the project, the authors were supported  by the  National Research Foundation of Ukraine funded by Ukrainian State budget in frames of project 2020.02/0096 ``Operators in infinite-dimensional spaces:  the interplay between geometry, algebra and topology''. The first author was supported by Weizmann Institute of Science (Israel) Emergency program for scientists affected by the war in Ukraine}

\begin{abstract}
We address pairs $(X, Y)$ of metric spaces with the following property: for every mapping $f: X \to Y$ the existence of points $x, y \in X$ with  $d(f(x),f(y)) > d(x,y)$ implies the existence of $\widetilde{x}, \widetilde{y}\in X$ for which $d(f(\widetilde{x}),f(\widetilde{y})) < d(\widetilde{x},\widetilde{y})$. We give sufficient conditions for this property and for its uniform version in terms of finite $\eps$-nets and finite $\eps$-separated subsets.
\end{abstract}

\maketitle
\section{Introduction}

\begin{definition} \label{def-plast}  A metric space $X$ is said to be \emph{Expand-Contract plastic} (or simply, an EC-plastic space) if every noncontractive bijection from $X$ onto itself is an isometry.
\end{definition}

This property was introduced and studied in depth in \cite{NaiPioWing}. The initial point of that study was the fact, that every totally bounded metric space is EC-plastic \cite{FHuDeh}. In reality, totally bounded spaces possess a stronger property  \cite[Theorem 1.1]{NaiPioWing} or \cite[Satz IV]{FHuDeh}, which for our convenience we formalize in the following definition.

\begin{definition} \label{def-str-plast}  A metric space $X$ is said to be \emph{strongly plastic} if for every mapping $f: X \to X$ the existence of points $x, y \in X$ with  $d(f(x),f(y)) > d(x,y)$ implies the existence of $\widetilde{x}, \widetilde{y}\in X$ for which $d(f(\widetilde{x}),f(\widetilde{y})) < d(\widetilde{x},\widetilde{y})$.
\end{definition}

In other words, $X$ is strongly plastic if every noncontractive mapping $f: X \to X$  is an isometric embedding (not necessarily bijective).

There are other examples of EC-plastic spaces, in particular the closed unit ball $B_X$ of every strictly convex Banach space $X$ is  EC-plastic \cite[Theorem 2.6]{CKOW2016}. It is an open problem whether  EC-plasticity is possessed by unit balls of all Banach spaces.

\begin{example} \label{example1-not-str-plast} The unit ball $B_{\ell_2}$ of the Hilbert space $\ell_2$ is not strongly plastic. So, it is a natural example of EC-plastic space that is not strongly plastic.
\end{example}
\begin{proof}
Indeed, denote, for every $x=(x_1, x_2, \ldots) \in B_{\ell_2}$,
$$
f(x) = \left(\sqrt{1 - \|x\|^2}, x_1, x_2, \ldots \right).
$$
This defines non-contractive mapping  $f: B_{\ell_2} \to B_{\ell_2}$ which is not an isometry, because for $x= (0, 0, \ldots)$ and $y = (1,0, 0, \ldots)$ we have
$$
\left\| f(x) - f(y) \right\| = \sqrt{2} > 1 = \left\| x - y \right\|. \qedhere
$$
\end{proof}

The next definition is motivated  by  \cite{Nitka57, Nitka98}.

\begin{definition} \label{def-str-un-plast}  A metric space $X$ is said to be \emph{uniformly strongly plastic} (\emph{uniformly EC-plastic}) if for every $\eps > 0$ there is $\delta = \delta(\eps) > 0$ such that  for every $f\colon X\to X$ (respectively, for every bijection $f\colon X\to X$) if there are $x,y\in X$ with $d(f(x),f(y)) > d(x,y)+\eps$, then there are $\widetilde{x}, \widetilde{y}\in X$ with $d(f(\widetilde{x}),f(\widetilde{y})) < d(\widetilde{x},\widetilde{y})-\delta$.
\end{definition}

 The main result from \cite{Nitka98}, previously demonstrated by the same author in \cite[Satz 1]{Nitka57} under the additional assumption of the denseness of $f(X)$, says that every  totally bounded metric space $X$ is uniformly strongly plastic. Moreover, the estimation of $\del$ relies only on the smallest number $n(\eps)$ of the elements of $\eps$-net in $X$. Namely, Nitka demonstrated that

\begin{equation} \label{eq-nitka1}
\delta = \frac{2 \eps}{11\left(n({\frac{\eps}{11}})\left(n({\frac{\eps}{11}})-1\right) + 2\right)}
\end{equation}
can be used.

Our paper is mainly devoted to analogous questions for mappings between two different metric spaces.

\begin{definition} \label{def-plast-pair}  A pair $(X, Y)$ of metric spaces is said to be \emph{EC-plastic} (\emph{strongly plastic}) if every noncontractive bijection (mapping) from $X$ to $Y$ is an isometry (an isometric embedding, respectively).
\end{definition}

EC-plasticity for pairs of unit balls of different Banach spaces has being studied in \cite{Zav} and developed further  in \cite{AnKaZa}. In particular, it is known that for EC-plasticity of a pair of unit balls it is sufficient to assume strict convexity of one of the corresponding spaces.

The uniform versions of the above properties can be formulated in the following way.

\begin{definition} \label{def-str-un-plast-pairs} Let  $(X, Y)$ be a pair of metric spaces. For every $\eps > 0$ denote $\delta_{X, Y}(\eps)$ the suprema of those $\delta \ge 0$ such that for every bijection $f\colon X\to Y$   if there are $x,y\in X$ with $d(f(x),f(y)) > d(x,y)+\eps$, then there are $\widetilde{x}, \widetilde{y}\in X$ with $d(f(\widetilde{x}),f(\widetilde{y})) < d(\widetilde{x},\widetilde{y})-\delta$. The function $\eps \mapsto \delta_{X, Y}(\eps)$ is said to be the \emph{modulus of plasticity} of the pair  $(X, Y)$. The pair  $(X, Y)$ is said to be \emph{uniformly EC-plastic} if $\delta_{X, Y}(\eps) > 0$ for every  $\eps > 0$.

Analogously, just substituting the word ``bijection'' by the word ``mapping'' we define the \emph{modulus of strong plasticity} $\delta_{X, Y}^s(\eps)$, and we call  $(X, Y)$ \emph{uniformly strongly plastic} if $\delta_{X, Y}^s(\eps) > 0$ for every  $\eps > 0$.
\end{definition}

Our goal is to demonstrate that if the ``size'' of $Y$ is not bigger than the ``size'' of $X$ and the spaces are totally bounded then the pair  $(X, Y)$ is plastic or strongly plastic, depending on the meaning of the word ``size'' above. We are interested also in the uniform versions of such results and the corresponding quantitative estimates.

Remark, that previously known for the case of $X = Y$ approach from \cite{FHuDeh, Nitka57, Nitka98} and \cite{NaiPioWing}  used the dynamical system generated by $f: X \to X$ (the sequence of iterations $f, f \circ f, f \circ f \circ f, \ldots$), which makes no sense for $f: X \to Y$.  So, for our goals we develop another argument.

The structure of the paper is as follows.  At first, in Section \ref{sec-finite} we consider metric spaces that consist of finitely many points. On this example we explain a very simple idea whose variations work later (with some modifications) in more involved cases. We give optimal quantitative estimates for the case when $X=Y$ is finite. Also, we present an example of metric space which is plastic but not uniformly plastic. After that, in Section \ref{sec-precompact} we consider the general case of pairs of infinite totally bounded spaces.

Let $X$ be a metric space, $A\subset X$ be a non-empty subset. Below we use the notation $|A|$ for the number of elements of $A$ (which may be finite or equal to $+\infty$), $\diam (A)$ stands for the diameter of $A$
$$
\diam (A) = \sup\{d(x, y): x,y \in X\}.
$$

Recall that a subset $A$ of a metric space $X$ is said to be an $\eps$-net if for every $x \in X$ there is $a \in A$ with $d(x,a) < \eps$. A metric space  is called \emph{totally bounded} (or precompact) if for every $\eps > 0$ the space possesses a finite $\eps$-net.

\section{Plasticity in finite metric spaces} \label{sec-finite}

In this section we deal with finite metric spaces. If $|X| > |Y|$  then every  $f\colon X\to Y$ is not injective, which implies strong plasticity of $(X, Y)$ (and EC-plasticity as well).

 In the case of $|X| = |Y| < \infty$  EC-plasticity is the same as strong plasticity. Indeed, let $(X, Y)$ be EC-plastic and $f\colon X\to Y$ be a mapping that increases distance between some two points. In the case of $f$ being bijective, EC-plasticity implies that $f$  decreases distance between some other two points (because $F$ is not an isometry and thus, by EC-plasticity, cannot be noncontractive). In the case of $f$ being not bijective, the condition  $|X| = |Y| < \infty$ implies that $f$ is not injective, so again $f$ decreases the distance between some other two points.

Finally, if $N =|X| < |Y| < \infty$, the strong plasticity of $(X, Y)$ is equivalent to the following property: for every metric subspace $E \subset Y$ with $|Y| = N$ the pair  $(X, E)$ is EC-plastic.

All this explains why in this section we temporarily forget about strong plasticity and concentrate on EC-plasticity. Moreover, in the case of spaces $X, Y$ of different finite cardinality $|X| \neq |Y|$ the absence of bijections between $X$ and $Y$ trivially guaranties EC-plasticity of the pair $(X, Y)$. So, the interesting for us case that we consider below in this section is $|X| = |Y| = N$, where $N$ is a natural number.

\subsection{EC-plasticity in the finite case} \label{subsec-finite1} $ $

\medskip

We start with an elementary but enlightening remark. For a finite metric space $A$, consider the following quantity

\beq  \label{eq-sigma-1}
\sigma(A)=\sum_{a,b\in A}d(a,b).
\eeq

\begin{theo} \label{theo-fin-1}
Let $X$, $Y$ be finite metric spaces with $|X| = |Y| = N \in \N$ and $\sigma(Y) \le \sigma(X)$, then the pair  $(X, Y)$ is  EC-plastic. Moreover, $(X, Y)$ is uniformly plastic with  $\delta_{X, Y}(\eps) \ge \eps \left(\frac{N(N-1)}{2} - 1\right)^{-1}$ for all $\eps > 0$.
\end{theo}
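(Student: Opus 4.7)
The whole argument revolves around the observation that $\sigma$ is a bijection-invariant. For any bijection $f\colon X\to Y$, the map $(a,b)\mapsto (f(a),f(b))$ is a bijection from $X\times X$ onto $Y\times Y$, so
$$
\sum_{a,b\in X} d(f(a),f(b)) \;=\; \sum_{c,d\in Y} d(c,d) \;=\; \sigma(Y) \;\le\; \sigma(X).
$$
Subtracting, I get the master inequality
$$
\sum_{a,b\in X}\bigl(d(f(a),f(b))-d(a,b)\bigr)\;\le\;0,
$$
equivalently a sum over the $\binom{N}{2}$ unordered pairs $\{a,b\}$ with $a\ne b$, since diagonal terms vanish and unordered terms come with a factor $2$ that cancels on both sides.

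For the qualitative statement, I would argue as follows. Let $f\colon X\to Y$ be a noncontractive bijection. Every summand in the master inequality is then $\ge 0$, while the total is $\le 0$; hence every summand is exactly $0$, i.e.\ $d(f(a),f(b))=d(a,b)$ for all $a,b\in X$, so $f$ is an isometry. This gives EC-plasticity of $(X,Y)$.

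For the quantitative bound, I would set $\delta_0:=\eps\bigl(\tfrac{N(N-1)}{2}-1\bigr)^{-1}$ and, fixing any $\delta<\delta_0$, verify the implication in Definition \ref{def-str-un-plast-pairs}. Suppose $f\colon X\to Y$ is a bijection and there are $x,y\in X$ with $d(f(x),f(y))>d(x,y)+\eps$. Then in the master sum (taken over unordered distinct pairs) the pair $\{x,y\}$ contributes strictly more than $\eps$, while the whole sum is $\le 0$; the remaining $\binom{N}{2}-1$ summands must therefore sum to something strictly less than $-\eps$. By the pigeonhole principle, at least one of them is strictly less than $-\eps/(\binom{N}{2}-1)=-\delta_0<-\delta$, giving a pair $\widetilde x,\widetilde y$ with $d(f(\widetilde x),f(\widetilde y))<d(\widetilde x,\widetilde y)-\delta$. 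Taking the supremum over such $\delta$ yields $\delta_{X,Y}(\eps)\ge \delta_0$.

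I do not foresee a serious obstacle: the proof is essentially the averaging trick together with a careful bookkeeping of ordered versus unordered pairs (the factor of $2$ in the definition of $\sigma$ cancels when one restricts the outer sum to unordered pairs, which is what makes the combinatorial denominator $\tfrac{N(N-1)}{2}-1$ rather than $N(N-1)-1$). The only minor point to handle cleanly is to phrase the supremum definition of $\delta_{X,Y}$ correctly, which is why it is convenient to prove the implication for every $\delta<\delta_0$ rather than for $\delta=\delta_0$ itself.
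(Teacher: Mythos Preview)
Your proof is correct and follows essentially the same idea as the paper's: both use that a bijection $f$ satisfies $\sum_{a,b} d(f(a),f(b)) = \sigma(Y) \le \sigma(X)$, and then extract the quantitative bound from the $\binom{N}{2}-1$ remaining unordered pairs. The paper phrases the quantitative step as a contradiction (assume $d(f(a),f(b)) \ge d(a,b) - \delta$ for all pairs and derive $\sigma(Y) > \sigma(X)$), while you phrase it as a direct pigeonhole; these are the same argument, and your explicit remark about ordered versus unordered pairs is in fact a little cleaner than the paper's bookkeeping.
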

\begin{proof}

 Let $f\colon X\to Y$ be a bijection. Fix  $\eps>0$ and assume  the existence of  $x,y\in X$ such that
\begin{equation}\label{theo-fin-1ineq1}
d(f(x),f(y)) > d(x,y)+\eps.
\end{equation}
Denote $\eps \left(\frac{N(N-1)}{2} - 1\right)^{-1}$ by $\delta$. Our goal is to demonstrate the existence of
 $\widetilde{x}, \widetilde{y}\in Y$ with $d(f(\widetilde{x}),f(\widetilde{y})) < d(\widetilde{x},\widetilde{y})-\delta$. Assume to the contrary that
\begin{equation}\label{theo-fin-1ineq2}
d(f(a),f(b)) \ge d(a,b)-\delta
\end{equation}
for all $a, b\in X$. Then, combining bijectivity of $f$ with conditions \eqref{theo-fin-1ineq1} and \eqref{theo-fin-1ineq2}, we deduce that
\begin{align*}
\sigma(Y) &=\sum_{a,b\in X}d(f(a),f(b)) = d(f(x),f(y)) + \sum_{(a,b)\in X \times X \setminus \{(x,y)\}}d(f(a),f(b)) \\
&> d(x, y) +\eps + \sum_{(a,b)\in X \times X \setminus \{(x,y)\}}(d(a,b)-\delta) \\
&= \sum_{a,b\in X}d(a,b) +\eps - \left(\frac{N(N-1)}{2} - 1\right) \delta = \sigma(X),
\end{align*}
which gives the desired contradiction.
\end{proof}

In order to generalize the above argument, let us introduce one definition more.

\begin{definition} \label{def-fin-2} Denote $\mathcal M_N$ the collection of all $N$-point metric spaces. For $X, Y \in \mathcal M_N$, a mapping $f : X \to Y$ is called \emph{expansion} if it is noncontractive and for some pair $a, b\in X$ it increases strictly the distance: $d(f(a),f(b)) > d(a,b)$. A function $\psi : \mathcal M_N \to [0, +\infty)$ is said to be a \emph{proper measurement} if $\psi(Y) > \psi(X)$ for every $X, Y \in \mathcal M_N$ for which exists an expansion $f : X \to Y$.
\end{definition}

With this definition, Theorem \ref{theo-fin-1} can be generalized in the following way:

\begin{propos} \label{prop-fin-3}
If $\psi : \mathcal M_N \to [0, +\infty)$ is a proper measurement then every pair $X, Y \in \mathcal M_N$ with  $\psi(Y) \le \psi(X)$ is EC-plastic.
\end{propos}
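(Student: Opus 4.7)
The plan is to recognize that this proposition is essentially a direct reformulation of Definition \ref{def-fin-2}, packaged as a statement about EC-plasticity. The proof should proceed by contraposition in one step: if the pair is not EC-plastic, then there is a noncontractive bijection $f \colon X \to Y$ that is not an isometry, which immediately gives an expansion witnessing $\psi(Y) > \psi(X)$.

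In more detail, I would start by fixing a proper measurement $\psi$ and a pair $X, Y \in \mathcal M_N$ with $\psi(Y) \le \psi(X)$, together with an arbitrary noncontractive bijection $f \colon X \to Y$. The goal is to show that $f$ is an isometry. Suppose towards a contradiction that $f$ is not an isometry. Since $f$ is noncontractive, $d(f(a), f(b)) \ge d(a,b)$ for every $a, b \in X$; the failure of $f$ to be an isometry therefore forces the existence of some pair $a, b \in X$ at which the inequality is strict, i.e.\ $d(f(a), f(b)) > d(a,b)$. This is precisely the expansion condition of Definition \ref{def-fin-2}.

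At this point the properness of $\psi$ applies directly: the existence of an expansion $f \colon X \to Y$ yields $\psi(Y) > \psi(X)$, contradicting the standing hypothesis $\psi(Y) \le \psi(X)$. Hence every noncontractive bijection from $X$ to $Y$ must be an isometry, which is the desired EC-plasticity.

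There is essentially no technical obstacle here; the proof is a one-line unpacking of the definitions, and the only thing worth emphasizing is the elementary observation that a noncontractive map between finite metric spaces which fails to be an isometry must strictly increase at least one pairwise distance, so that the expansion clause of Definition \ref{def-fin-2} is automatically triggered. The actual content lies in Theorem \ref{theo-fin-1}, which verifies that $\sigma$ is a concrete example of a proper measurement; Proposition \ref{prop-fin-3} just abstracts the mechanism of that proof.
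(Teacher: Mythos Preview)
Your proposal is correct and matches the paper's approach: the paper states Proposition~\ref{prop-fin-3} without an explicit proof, treating it as an immediate consequence of Definition~\ref{def-fin-2}, which is exactly the one-line unpacking you provide. Your commentary that the real content lies in Theorem~\ref{theo-fin-1} (showing $\sigma$ is a proper measurement) is also precisely how the paper frames it.
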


Following the proof of Theorem \ref{theo-fin-1}, one can easily see that the quantity $\sigma$ defined by \eqref{eq-sigma-1} is a proper measurement. There is a number of other proper measurements very similar to $\sigma$. Namely, fixing a strictly increasing function $g: [0, +\infty) \to [0, +\infty)$ with $g(0) = 0$ and denoting for every $A \in \mathcal M_N$
\beq  \label{eq-sigma-gg}
\sigma_g(A)=\sum_{a,b\in A} g(d(a,b)),
\eeq
one defines a proper measurement.

It would be interesting to give an explicit description of all proper measurements. Also, the following question looks interesting.

\begin{prob} \label{prob-fin-4}
Is it true that for every EC-plastic pair $X, Y \in \mathcal M_N$ there is a proper measurement $\psi : \mathcal M_N \to [0, +\infty)$ such that $\psi(Y) \le \psi(X)$?
\end{prob}

{\bf Remark.} If for an EC-plastic pair $X, Y \in \mathcal M_N$ there exists a non-contractive bijection $f\colon X \to Y$, then one can choose $\psi(X):=\sigma(X)$. Indeed,
$$\psi(Y)=\sum_{a,b\in Y}d(a,b)=\sum_{x,y\in X}d(f(x),f(y))=\sum_{x,y\in X}d(x,y)=\psi(X),$$
since $f$ is an isometry. So, in fact Problem \ref{prob-fin-4} is open only for trivially EC-plastic pairs  $X, Y \in \mathcal M_N$ for which there are no non-contractive bijections $f\colon X \to Y$.

\subsection{Uniform plasticity in the finite case for $X = Y$} \label{subsec-finite-applic} $ $

\medskip
The aim of this subsection is to give the promised in the Introduction optimal estimate of the modulus of plasticity of a finite metric space through the number of elements of the space. The case of $X = Y$ that we consider now permits to use iterations of a map $f: X \to Y$ and corresponding orbits of elements, which enables us to give better estimates than in Theorem \ref{theo-fin-1}.

For each $N \in \N$ denote $M(N)$ the following quantity:
$$
M(N)=\begin{cases}
    k(k+1), & \mbox{if } N=2k+1, \, k \in \{2, 3, 4, \ldots \} \\
    (2k-1)(2k+1), & \mbox{if } N=4k, \, k \in \{2, 3, 4, \ldots \} \\
    (2k-1)(2k+3), & \mbox{if } N=4k+2, \, k \in \{2, 3, 4, \ldots \} \\
    N, & \mbox{otherwise}.
  \end{cases}
$$
\begin{lem} \label{lem-lcm}
$M(N) = \max\{\max_{l+m\leq N}LCM(l,m),N\}$,  where LCM is the abbreviation for the least common multiple of two naturals.
\end{lem}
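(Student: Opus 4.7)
The plan is to reduce the maximization of $LCM(l,m)$ over $l+m\le N$ to the coprime case, and then to compute the resulting quantity directly. Set
$$
F(N):=\max\{lm : l,m\in\N,\ \gcd(l,m)=1,\ l+m\le N\},
$$
with the convention $F(N)=0$ when no such pair exists. The argument has two ingredients, followed by a short case comparison.

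The first ingredient is the \emph{coprime reduction}: $\max_{l+m\le N}LCM(l,m)=F(N)$. For any $(l,m)$ with $l+m\le N$, writing $d=\gcd(l,m)$ and $l=da$, $m=db$ with $\gcd(a,b)=1$ gives $LCM(l,m)=dab$ and $a+b\le\lfloor N/d\rfloor$, whence $LCM(l,m)\le d\,F(\lfloor N/d\rfloor)$. Combining the trivial bound $F(K)\le K^2/4$ with the lower bound $F(N)\ge N^2/4-4$ supplied by the next step yields $d\,F(\lfloor N/d\rfloor)\le F(N)$ for every $d\ge 2$ as soon as $N\ge 5$, the very few remaining small cases being checked by inspection.

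The second ingredient is the \emph{explicit computation} of $F(N)$, via the identity $ab=(s^2-(a-b)^2)/4$ with $s=a+b$: for fixed $s$ one wants $|a-b|$ as small as possible subject to $\gcd(a,b)=1$. If $s$ is odd, the consecutive pair $((s-1)/2,(s+1)/2)$ is coprime, giving $ab=(s^2-1)/4$. If $s\equiv 0\pmod 4$, the pair $(s/2-1,s/2+1)$ consists of two odd integers at distance $2$, hence coprime, giving $ab=s^2/4-1$. If $s\equiv 2\pmod 4$ and $s\ge 6$, the distance-$2$ pair is forbidden (both even), and one must jump to $(s/2-2,s/2+2)$ --- two odd integers whose gcd divides $4$ and is therefore $1$ --- giving $ab=s^2/4-4$. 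A short monotonicity check as $s$ varies up to $N$ shows that for every $N\ge 7$ the supremum is attained at $s=N$ and equals exactly the first three branches of $M(N)$; in the boundary case $N=6$ the maximum is attained at $s=5$ with value $6=N$, and for $N\le 4$ one verifies $F(N)<N$ directly.

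Putting the two ingredients together, $\max_{l+m\le N}LCM(l,m)=F(N)$, so the quantity in the lemma equals $\max\{F(N),N\}$. A final case check produces $M(N)$: for $N\ge 7$ one has $F(N)>N$ and $F(N)$ equals the relevant non-otherwise branch of $M$; for $N\in\{1,2,3,4,6\}$ one has $F(N)\le N$, so the maximum is $N$, which is the ``otherwise'' branch. The main technical nuisance I expect is the parity case $s\equiv 2\pmod 4$: because $|a-b|=2$ is ruled out by coprimality, one must argue that the optimal gap truly jumps to $4$ and that no asymmetric competitor (or pair with smaller $s$) overtakes it --- this is precisely what separates the $N=4k$ and $N=4k+2$ formulas and forces the restriction $k\ge 2$ in the piecewise definition. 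Everything else is routine arithmetic.
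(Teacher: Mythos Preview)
Your argument is essentially correct and is organized somewhat differently from the paper. The paper fixes the sum $l+m=N$ first, computes $\max_n LCM(n,N-n)$ case by case, and then argues (rather tersely) that passing from sum $N$ to sum $\le N$ can only help except at $N=6$. You instead perform a clean coprime reduction up front, turning the problem into maximizing $ab$ over coprime pairs with bounded sum, and then locate the optimal pair by minimizing $|a-b|$ at fixed $s$. Both routes land on the same parity arithmetic, but your coprime reduction makes explicit the step the paper leaves to the reader, and your treatment of the awkward residue class $s\equiv 2\pmod 4$ is more transparent than the paper's bare assertion of the formula.

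One small slip to fix: your final case split covers $N\ge 7$ and $N\in\{1,2,3,4,6\}$ but omits $N=5$. For $N=5$ one has $F(5)=6>5$ (the pair $(2,3)$), and indeed $5=2k+1$ with $k=2$ already lies in the first branch of the definition of $M$, giving $M(5)=6$. So $N=5$ belongs with the ``non-otherwise'' cases, not with the small exceptions. Relatedly, your monotonicity claim ``for every $N\ge 7$ the supremum is attained at $s=N$'' in fact holds for $N=5$ as well (it fails only at $N=6$), so you may simply replace $N\ge 7$ by $N\ge 5,\ N\neq 6$ throughout and the bookkeeping closes. A second, more cosmetic point: the inequality $d\,F(\lfloor N/d\rfloor)\le F(N)$ obtained from $F(K)\le K^2/4$ and $F(N)\ge N^2/4-4$ literally requires $N^2\ge 32$, i.e.\ $N\ge 6$, not $N\ge 5$; the case $N=5$ is genuinely one of the ``few remaining small cases'' you defer to inspection.
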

\begin{proof}
Let us first compare $N$ and $\max_{l+m\leq N}LCM(l,m)$ for different $N$. It is well known that the function $g(n)=n(N-n)$ reaches its maximum when $n=N/2$. Since we are interested only in natural $n$ and $LCM(n,N-n)=\frac{n(N-n)}{GCD(n, N-n)}$, where GCD is the abbreviation for the greatest common divisor, we have
$$
LCM(n,N-n)=\begin{cases}
                     k(k+1), & \mbox{if } N=2k+1 \\
                     (2k-1)(2k+1), & \mbox{if } N=4k \\
                     (2k-1)(2k+3), & \mbox{if } N=4k+2.
                   \end{cases}
$$
Using this representation, one can also observe $LCM(n,N-n)\geq LCM(n,N-1-n)$ unless $N=6$. Then
$$
\max_{l+m\leq N}LCM(l,m)=\begin{cases}
                     k(k+1), & \mbox{if } N=2k+1 \\
                     (2k-1)(2k+1), & \mbox{if } N=4k \\
                     (2k-1)(2k+3), & \mbox{if } N=4k+2 \\
                     6, & \mbox{if } N=6.
                   \end{cases}
$$
It is easy to see, that $\max_{l+m\leq N}LCM(l,m)>N$ unless $N=3$, $N=4$ or $N=6$. Thus, in fact,  $\max\{\max_{l+m\leq N}LCM(l,m),N\}=M(N)$.
\end{proof}

\begin{theo} \label{theor2}
Let $X$ be a metric space with $|X| \le N \in \N$.  Then  $\delta_{X, X}(\eps) \ge \frac{\eps}{M(N)-1}$ for all $\eps > 0$.
\end{theo}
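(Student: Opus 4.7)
The plan is to exploit the fact that, since $X$ is finite and $f\colon X\to X$ is a bijection, $f$ acts as a permutation of $X$, so every element is $f$-periodic. I argue by contradiction: setting $\delta:=\eps/(M(N)-1)$, suppose some bijection $f\colon X\to X$ admits $x,y\in X$ with $d(f(x),f(y))>d(x,y)+\eps$ while satisfying $d(f(a),f(b))\geq d(a,b)-\delta$ for all $a,b\in X$.

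The key device is to follow the joint iterate $(f^k(x),f^k(y))$. Denoting by $p$ and $q$ the lengths of the $f$-cycles through $x$ and $y$, set $L:=LCM(p,q)$ if $x,y$ lie in distinct cycles and $L:=p$ if they lie in the same cycle; then $(f^L(x),f^L(y))=(x,y)$, so the sequence $a_k:=d(f^k(x),f^k(y))$ is $L$-periodic. Applying the assumed bound to the pairs $(f^k(x),f^k(y))$ and $(f^{k+1}(x),f^{k+1}(y))$ gives $a_{k+1}-a_k\geq-\delta$ for each $k$, while by hypothesis $a_1-a_0>\eps$. Telescoping the $L$ inequalities around one full period,
\[0=a_L-a_0=(a_1-a_0)+\sum_{k=1}^{L-1}(a_{k+1}-a_k)>\eps-(L-1)\delta,\]
so $(L-1)\delta>\eps$.

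It remains to establish $L\leq M(N)$, after which $(L-1)\delta\leq(M(N)-1)\delta=\eps$ contradicts the displayed strict inequality. If $x,y$ belong to the same cycle, then $L=p\leq|X|\leq N\leq M(N)$. If they lie in distinct cycles, then $p+q\leq|X|\leq N$, and Lemma \ref{lem-lcm} yields $L=LCM(p,q)\leq\max_{l+m\leq N}LCM(l,m)\leq M(N)$.

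The conceptual gain over Theorem \ref{theo-fin-1} is the shift from summing over all ordered pairs of $X$ to summing only along a single joint orbit: the $\frac{N(N-1)}{2}-1$ telescoped terms are replaced by $L-1\leq M(N)-1$, which is what upgrades the estimate. I do not expect a serious obstacle---the permutation structure makes the telescoping entirely routine---and the only subtle point is the cycle-length bound $L\leq M(N)$, for which the piecewise definition of $M(N)$ together with Lemma \ref{lem-lcm} was tailored in advance.
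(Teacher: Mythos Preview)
Your proof is correct and follows essentially the same approach as the paper: both argue by contradiction, track the sequence $d(f^k(x),f^k(y))$ along the joint orbit, bound the common period by $M(N)$ via Lemma~\ref{lem-lcm}, and obtain the contradiction by telescoping (the paper phrases the telescoping as an iteration of a one-step inequality, but the content is identical).
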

\begin{proof}
Fix $\eps > 0$ and a bijection $f\colon X \to X$. Assume the existence of $x,y\in X$ such that $d(f(x),f(y)) > d(x,y)+\eps$. Our goal is to demonstrate the existence of $\widetilde{x}, \widetilde{y}\in X$ such that $d(f(\widetilde{x}),f(\widetilde{y})) < d(\widetilde{x},\widetilde{y}) - \frac{\eps}{M(N)-1}$.

Consider the orbits $O(x)$ and $O(y)$ of the elements $x$ and $y$ under the action of $f$ and denote by $D$  the least $k$, such that $f^k(x)=x$ and $f^k(y)=y$. If $O(x) \neq O(y)$ one may conclude $D=LCM(i,j)\leq\max_{l+m\leq N}LCM(l,m)$, where $i$ and $j$ denote the number of elements in $O(x)$ and $O(y)$ respectively. If $O(x) = O(y)$, i.e. $y \in O(x)$ and vice versa, then $D\leq N$.
Now we may conclude $D \leq M(N)$ and argue by contradiction. Suppose for every $a, b\in X$
\begin{equation*}
  d(f(a),f(b)) \ge d(a,b)-\frac{\eps}{M(N)-1}.
\end{equation*}
 This inequality, in particular, implies that for every $k \in \N$
 \begin{align} \label{theor2eq1}
   d(f^{k+1}(x),f^{k+1}(y))& \ge d(f^k(x),f^k(y))-\frac{\eps}{M(N)-1} \nonumber \\
   &\geq d(f^k(x),f^k(y))-\frac{\eps}{D-1}.
 \end{align}
Iterating \eqref{theor2eq1}, we get
$$
d(f^{D}(x),f^{D}(y)) \geq d(f(x),f(y)) - (D-1)\frac{\eps}{D-1} = d(f(x),f(y)) - \eps,
$$
which together with condition $d(f(x),f(y)) > d(x,y)+\eps$  implies  that
$$ d(x,y)= d(f^{D}(x),f^{D}(y)) > d(x,y).$$
So, we obtained the desired contradiction.
\end{proof}
\begin{theo} \label{theo-fin-sharp}
The estimate from the previous theorem cannot be improved.
\end{theo}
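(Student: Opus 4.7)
The plan is to show, for each $N\ge 3$ and each $\alpha>0$, the existence of a metric space $X$ with $|X|=N$ and a bijection $f\colon X\to X$ witnessing $\delta_{X,X}(\eps)\le(\eps+\alpha)/(M(N)-1)$; letting $\alpha\to 0$ and combining with Theorem \ref{theor2} yields equality, which is exactly the sharpness claim.

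By Lemma \ref{lem-lcm}, in the three generic cases of $M(N)$ the maximum is attained by cycle lengths $l,m$ with $l+m=N$, $\gcd(l,m)=1$, and $lm=M(N)$: take $(l,m)=(k,k+1)$ if $N=2k+1$, $(2k-1,2k+1)$ if $N=4k$, and $(2k-1,2k+3)$ if $N=4k+2$ (all with $k\ge 2$). I would let $X=\{x_0,\ldots,x_{l-1}\}\sqcup\{y_0,\ldots,y_{m-1}\}$, with $f$ acting as the cyclic shift on each part. Since $\gcd(l,m)=1$, the diagonal action on $\Z_l\times\Z_m$ has a single orbit of length $D:=M(N)$, so the pairs $(f^k(x_0),f^k(y_0))$ for $k=0,1,\ldots,D-1$ exhaust all cross-pairs exactly once.

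On that orbit I define a sawtooth of cross-distances by $a_0=c$, $a_1=c+\eps+\alpha$, and $a_{k+1}=a_k-(\eps+\alpha)/(D-1)$ for $k\ge 1$; since $a_D=a_0$, the assignment $d(f^k(x_0),f^k(y_0)):=a_k$ is consistent. All intra-cycle distances are set to a single constant $R$, which is automatically $f$-invariant and incurs no change under $f$. Choosing $c=\eps+\alpha$ and $R=2(\eps+\alpha)$ keeps every distance in $[\eps+\alpha,\,2(\eps+\alpha)]$, where the triangle inequality is checked triple by triple from $R\ge\eps+\alpha$ (bounding differences of cross-distances) and $2c\ge R$ (bounding their sum from below). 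By construction the pair $(x_0,y_0)$ expands by exactly $\eps+\alpha$, whereas every other pair's change is either $0$ (intra-cycle) or exactly $-(\eps+\alpha)/(D-1)$, giving $\delta_{X,X}(\eps)\le(\eps+\alpha)/(M(N)-1)$.

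The \emph{otherwise} cases $N\in\{3,4,6\}$, where $M(N)=N$, need a variant built on a single $N$-cycle: apply the sawtooth to the orbit of difference-$1$ pairs (of size $N$) and assign constant distances to the remaining difference classes. The main obstacle I expect is a clean simultaneous verification of the triangle inequality: in the two-cycle construction it collapses to the two band inequalities above, but in the single-cycle variant several difference classes coexist inside the same cycle, and the constants must be coordinated so that the intra-class constants dominate the sawtooth oscillation of the edge orbit.
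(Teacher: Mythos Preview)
Your construction is essentially the paper's own: two coprime cycles with a sawtooth on the cross-pair orbit for generic $N$, and a single $N$-cycle with a sawtooth on the adjacent-pair orbit for $N\in\{3,4,6\}$; the paper fixes $a\ge\eps$ and sets \emph{every} distance outside the sawtooth orbit equal to $a$, so that all distances lie in $[a,a+\eps]\subset[a,2a]$ and the triangle inequality is automatic. Your extra parameter $\alpha>0$ is a harmless (and arguably more careful) device to obtain the strict inequality $d(f(x_0),f(y_0))>d(x_0,y_0)+\eps$ required by the definition of $\delta_{X,X}$; the paper simply records that the expansion equals $\eps$. Your flagged worry about the single-cycle case dissolves by exactly the same band argument you used for two cycles: set every distance in the remaining difference classes to the same constant $a$ and keep the sawtooth in $[a,a+\eps+\alpha]$ with $a\ge\eps+\alpha$.
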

\begin{proof}
Since the estimate depends on $N$ and $\eps$, the examples which prove our statement will depend on $N$  and $\eps$ as well.   \\
{\bf CASE 1.} $N\neq 3,4,6$.  Let us introduce
$$
i=\begin{cases}
    k, & \mbox{if } N=2k+1 \\
    2k-1, & \mbox{if } N=4k \\
    2k-1, & \mbox{if } N=4k+2.
  \end{cases}
$$
and
$$
j=\begin{cases}
    k+1, & \mbox{if } N=2k+1 \\
    2k+1, & \mbox{if } N=4k \\
    2k+3, & \mbox{if } N=4k+2.
  \end{cases}
$$
Take a set $X=\{x_1,...x_i, y_1,...,y_j\}$ and introduce a function $f$ on it as follows
$$
f(x)=\begin{cases}
       x_{k+1}, & \mbox{if } x=x_k, 1\leq k < i \\
       x_1, & \mbox{if } x=x_i \\
       y_{k+1}, & \mbox{if } x=y_k, 1\leq k < j \\
       y_1, & \mbox{if } x=y_j,
     \end{cases}
$$
that is, $\{x_1,...x_i\}$ is the orbit of $x_1$,  $\{y_1,...,y_j\}$  is the orbit of $y_1$ under the action of $f$, and $M(N) = ij$.
Now, let us introduce the distance on $X$ by the following rule:
$$
d(x,y)=\begin{cases}
         a, & \mbox{if } x=x_1, y=y_1 \\
         a+\eps, & \mbox{if } x=f(x_1), y=f(y_1) \\
         d(f^{k-1}(x_1),f^{k-1}(y_1))-\frac{\eps}{M(N)-1}, & \mbox{if } x=f^k(x_1)\mbox{ and }\\
         & y=f^k(y_1), 2\leq k < ij \\
         a, & \mbox{otherwise},
       \end{cases}
$$
where $a \in \R$,  $a \geq \eps$ is a fixed number. With this definition, all the pairwise distances belong to the interval $[a, a + \eps] \subset [a, 2a]$, which ensures the validity of the triangle inequality.

For our metric space $(X, d)$ with $|X| \le N$ and the function $f$ the estimate is accurate:  $f$ increases the distance between $x_1$ and $y_1$ on $\eps$ but does not decrease any distance stronger than on $\frac{\eps}{M(N)-1}$.

In order to get an example with $|X| = N$ one may add artificially to the space that is constructed above $N - ij$ fixpoints $\{v_1, \ldots v_{N - ij}\}$ of $f$ and define the missing distances $d(x, y)$ to be $2a$ when at least one of the points $x, y$  belongs to $\{v_1, \ldots v_{N - ij}\}$.

For $N=3,4,6$ take a set $Z=\{z_1,...,z_N\}$ and introduce on it the following cyclic function $g$:
$$
g(x)=\begin{cases}
       z_{k+1}, & \mbox{if } x=z_k, 1\leq k < N \\
       z_1, & \mbox{if } x=z_N.
     \end{cases}
$$
The corresponding metrics in this case is
$$
p(x,y)=\begin{cases}
         a, & \mbox{if } x=z_1, y=g(z_1) \\
         a+\eps, & \mbox{if } x=g(z_1), y=g^2(z_1) \\
         d(g^{k-1}(x_1),g^{k-1}(y_1))-\frac{\eps}{M(N)-1}, & \mbox{if } x=g^k(z_1) \mbox{ and }\\
         & y=g^{k+1}(z_1), 2\leq k < N \\
         a, & \mbox{otherwise},
       \end{cases}
$$
and as before $a \geq \eps$ in order to ensure the triangle inequality.
For the metric space $(X, p)$ and the mapping $g$ the estimate is accurate.
\end{proof}

\subsection{An application: EC-plasticity does not imply uniform plasticity} \label{subsec-finite-applic} $ $

\medskip

The goal of this subsection is to construct an example of strongly plastic (and hence EC-plastic) infinite metric space $X$ which is not uniformly plastic. The idea of construction consists in putting together finite pieces $X_n$, each of which is uniformly plastic, but whose moduli of uniform plasticity at some fixed $\eps > 0$ tend to zero as $n$ tends to infinity.

So, fix  $\eps_0 = 1$ and a vanishing sequence of $\delta_n > 0$. Consider $a_n = 1 + \frac{1}{10 n}$, $n = 1, 2, \ldots$. Then $a_n \geq \eps_0$, so, according to the construction from Theorem \ref{theo-fin-sharp}, for each $n \in \N$ we may (and do) find a finite metric space $X_n$ and a mapping $f_n : X_n \to X_n$ with the following properties.
\begin{enumerate}
\item $d(x,y) \in [a_n, a_n + 1]$ for all $x, y \in X_n$.
\item $d(f_n(x), f_n(y)) \ge d(x,y) - \delta_n$ for all $x, y \in X_n$.
\item There is a pair $x_n, y_n \in X_n$ with $d(f_n(x_n), f_n(y_n)) = d(x,y) + 1$.
\end{enumerate}
  Without loss of generality, we may assume additionally the existence of $e_n \in X_n$ such that $d(x, e_n) = a_n + 1$ for all $x \in X_n \setminus \{e_n\}$. This can be achieved by adding to $X_n$ a new point $e_n$, defining the distances to the old points as $d(x, e_n) = a_n + 1$, and extending $f_n$ to this new bigger space by the trivial rule $f_n(e_n) = e_n$.

Also, without loss of generality, we assume that the sets $X_n$ are pairwise disjoint (otherwise, substitute $X_n$ by appropriate isometric copies).

We define the requested metric space $X$  as $X = \bigcup_{n =1}^\infty X_n$ equipped with the following metric: on each of $X_n$ the metric is inherited from $X_n$, and for  $x \in X_n$,   $y \in X_m$ with $n \neq m$ we just put $ d(x,y) = \frac32$. It remains to demonstrate that \textbf{(A)} $X$ is not uniformly plastic but \textbf{(B)} it is  strongly plastic.

\textbf{(A)}. For each $n \in \N$ define $g_n : X \to X$ as follows: $g_n(x) = f_n(x)$ for $x \in X_n$, and $g_n(x) = x$ for  $x \in X \setminus X_n$. With this definition, $d(g_n(x), g_n(y)) \ge d(x,y) - \delta_n$ for all $x, y \in X_n$, but there is a pair $x_n, y_n \in X_n$ with $d(g_n(x_n), g_n(y_n)) = d(x,y) + 1$. This demonstrates that $\delta_{X, X}(\eps) \le \delta_n \xrightarrow[n \to \infty]{} 0$ for every $\eps \in (0, 1)$, so  $\delta_{X, X}(\eps) = 0$.

\textbf{(B)}. Let $f : X \to X$ be a noncontractive mapping. Remark, that
$d(f(x), f(e_1)) \ge d(x, e_1) = a_1 + 1$ for all $x \in X_1 \setminus \{e_1\}$, but $d(a, b) < a_1 + 1$ if at least one of $a, b \in X$ does not belong to $X_1$. This implies that $f(X_1) \subset X_1$. By injectivity of $f$ and finiteness of $X_1$ this means that $f(X_1) = X_1$ and the restriction of $f$ to $X_1$ is an isometry.

Passing to complements, we obtain that  $f\left(\bigcup_{n =2}^\infty X_n \right) \subset \bigcup_{n =2}^\infty X_n $, and repeating the same argument we get that $f(X_2) = X_2$ and the restriction of $f$ to $X_2$ is an isometry. Proceeding analogously, we demonstrate that $f$ is a bijective isometry.

\medskip
The above example was constructed artificially. It would be interesting to know whether analogous examples exist among natural examples of EC-plastic spaces. For example:

\begin{prob} \label{prob-sec-appl-unif-1}
It is known that the unit ball $B_{\ell_2}$ of the Hilbert space $\ell_2$ is EC-plastic but not strongly plastic (see Example \ref{example1-not-str-plast} in the Introduction). \emph{Is $B_{\ell_2}$ uniformly plastic? Are there examples of uniformly plastic non-compact ellipsoids in $\ell_2$?} (See \cite{Zav2} for related questions).
\end{prob}

\section{Plasticity and uniform plasticity for pairs of totally bounded spaces} \label{sec-precompact}

The main technical tool that we use in this section is approximation of  a totally bounded metric space $X$ by its finite subsets.

\subsection{Plasticity for pairs of totally bounded spaces} \label{sec-precompact-ssec1} $ $

We use the following notation
$$
\alpha(X,\eps) = \inf\{\sigma(A), \text{ where } A  \text{ is } \eps \text{-net in } X\},
$$
where $\sigma(A)$, like in \eqref{eq-sigma-1}, stands for the sum of pairwise distances between the elements.

In this subsection we present the following model example of possible results.

\begin{theo} \label{sec-pl-tboun-theo1}
Let $X$, $Y$ be totally bounded metric spaces such that $\alpha(X,\eps)\leq \alpha(Y,\eps)$  for every $\eps>0$  and $f\colon X\to Y$ be a non-expansive surjection. Then $f$ is an isometry. In particular, the pair $(Y, X)$ is EC-plastic.
\end{theo}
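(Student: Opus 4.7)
The plan is to mimic Theorem \ref{theo-fin-1}, replacing the ambient finite spaces by $\eps$-nets and the quantity $\sigma$ by its infimum version $\alpha$. Assume for contradiction that $f$ is non-expansive and surjective but not an isometry, so there exist $x_0, y_0 \in X$ and $\eta>0$ with $d(f(x_0),f(y_0)) < d(x_0,y_0) - \eta$. Fix $\eps \in (0, \eta/8)$ and, using the definition of the infimum $\alpha(X,\eps)$, pick a finite $\eps$-net $A \subset X$ with $\sigma(A) < \alpha(X,\eps) + \eta/2$.

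Two preliminary facts should be verified. First, $f(A)$ is an $\eps$-net in $Y$: given $y\in Y$, surjectivity supplies $x\in X$ with $f(x)=y$, the $\eps$-net property of $A$ supplies $a\in A$ with $d(x,a)<\eps$, and non-expansiveness then yields $d(y,f(a))\le d(x,a)<\eps$. Second, picking $a_0, b_0\in A$ with $d(x_0,a_0)<\eps$ and $d(y_0,b_0)<\eps$, two triangle inequality computations combined with non-expansiveness give
$$d(a_0,b_0)-d(f(a_0),f(b_0)) > \eta - 4\eps > \eta/2,$$
and in particular $a_0 \neq b_0$, since $d(x_0,y_0)>\eta>2\eps$.

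Now I would sum non-expansiveness over all $(a,b)\in A\times A$, improving the bound by $\eta/2$ on each of the two ordered pairs $(a_0,b_0)$ and $(b_0,a_0)$, to obtain
$$\sum_{(a,b)\in A\times A} d(f(a),f(b)) \le \sigma(A)-\eta.$$
The map $(a,b)\mapsto (f(a),f(b))$ from $A\times A$ onto $f(A)\times f(A)$ is surjective, so each pair of $f(A)\times f(A)$ is hit at least once on the left, whence $\sigma(f(A))\le \sum_{(a,b)}d(f(a),f(b))$. Combining this with $\alpha(Y,\eps)\le\sigma(f(A))$ (which holds since $f(A)$ is an $\eps$-net in $Y$) and the standing hypothesis $\alpha(X,\eps)\le\alpha(Y,\eps)$ yields
$$\alpha(X,\eps)\le\alpha(Y,\eps)\le\sigma(f(A))\le\sigma(A)-\eta<\alpha(X,\eps)-\eta/2,$$
the desired contradiction.

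The main subtlety to watch is that $f|_A$ may fail to be injective: this is why the auxiliary sum $\sum_{(a,b)\in A\times A} d(f(a),f(b))$ is a safer bridge than $\sigma(f(A))$ alone, since it dominates $\sigma(f(A))$ regardless of any collapsing (each pair of $f(A)\times f(A)$ is hit at least once) while being cleanly bounded above by $\sigma(A)$ via pair-by-pair non-expansiveness. The other detail to match carefully is that the single-pair gain of $\eta/2$ is obtained through the $\eps$-net approximation and degrades like $\eta-4\eps$, so $\eps$ must be chosen small relative to $\eta$ before selecting the near-optimal $\eps$-net $A$.
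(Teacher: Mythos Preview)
Your proof is correct and follows essentially the same route as the paper's: assume a strict contraction at one pair, choose a near-optimal $\eps$-net $A$ in $X$ with $\eps$ small relative to the contraction gap, push it forward to an $\eps$-net $f(A)$ in $Y$ via surjectivity and non-expansiveness, and derive $\sigma(f(A))<\alpha(Y,\eps)$ for a contradiction. You are in fact slightly more careful than the paper in handling the possible non-injectivity of $f|_A$ through the auxiliary ordered sum; just note that under the paper's unordered-pair convention for $\sigma$ your displayed bound should read $2\sigma(A)-\eta$ rather than $\sigma(A)-\eta$, a purely cosmetic fix that leaves the final chain of inequalities intact.
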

\begin{proof}
We will argue ``ad absurdum". Suppose  $f$ is not an isometry. Then there are $x_1, x_2\in X$ and $\eps>0$ such that
\begin{equation}\label{ineq1}
d(f(x_1),f(x_2))<d(x_1,x_2)-\eps.
\end{equation}
There also exists finite $\frac{\eps}{6}$-net in $X$ such that
$$
\sigma(A)< \alpha(X,\frac{\eps}{6})+\frac{\eps}{6}.
$$
Since $f$ is surjective, for any $y\in Y$ there is $x\in X$ such that $f(x)=y$.
There exists $a\in A$ such that $d(x,a) < \frac{\eps}{6}$. Then $d(y,f(a)) < \frac{\eps}{6}$. So, $f(A)$ is $\frac{\eps}{6}$-net in $Y$.
There are $a_1,a_2\in A$ such that
$$d(a_1,x_1) < \frac{\eps}{6} \text{ and } d(a_2,x_2) < \frac{\eps}{6}.$$
Then
$$d(f(a_1),f(x_1))<\frac{\eps}{6} \text{ and } d(f(a_2),f(x_2))<\frac{\eps}{6}.$$
At last, we have the following chain of inequalities
\begin{align*}
  d(f(a_1),f(a_2))&\leq d(f(a_1),f(x_1))+d(f(x_1),f(x_2))+d(f(x_2),f(a_2)) \\
  & < d(f(x_1),f(x_2))+\frac{2\eps}{6}<d(x_1,x_2)-\eps+\frac{\eps}{3}\\
  &= d(x_1,x_2)-\frac{2\eps}{3}\leq d(x_1,a_1)+d(a_1,a_2)+d(a_2,x_2)\\
  &-\frac{2\eps}{3}<d(a_1,a_2)-\frac{2\eps}{3}+\frac{2\eps}{6}=d(a_1,a_2)-\frac{\eps}{3}.
\end{align*}
That is why we have
$$
\sigma(f(A))=\sum_{a,b\in A}d(f(a),f(b))<\sum_{a,b\in A}d(a,b)-\frac{\eps}{3}=\sigma(A)-\frac{\eps}{3}.
$$
Then
$\sigma(f(A))<\sigma(A)-\frac{\eps}{3}<\alpha(X,\frac{\eps}{6})+\frac{\eps}{6}-\frac{\eps}{3}=\alpha(X,\frac{\eps}{6})-\frac{\eps}{6}\leq \alpha(Y,\frac{\eps}{6})-\frac{\eps}{6} ,$
which contradicts the definition of $\alpha(Y,\frac{\eps}{6})$.
\end{proof}

\subsection{Uniform strong plasticity for pairs of totally bounded spaces} \label{sec-precompact-ssec2} $ $

A version of Theorem \ref{sec-pl-tboun-theo1} that gives uniform strong plasticity needs much more effort. We managed to achieve this goal using maximal $\eps$-separated sets instead of $\eps$-nets. Recall, that a set $A$ with $|A| \ge 2$ in a metric space $X$ is called \emph{$\eps$-separated}, if all pairwise distances between different elements of $A$ are greater than or equal to $\eps$. For the sake of convenience, a set consisting of one point is considered to be $\eps$-separated for all $\eps > 0$. A subset $A \subset X$ is called \emph{maximal $\eps$-separated}, if it is not a proper subset of any other $\eps$-separated subset of $X$. A maximal $\eps$-separated set is automatically an $\eps$-net. The based on $\eps$-separation substitute for $\alpha(X,\eps)$ that we use below is

$$
s(X,\eps) =\sup\{\sigma(B), \text{ where } B \neq \emptyset  \text{ is a finite } \eps \text{-separated set in } X\}.
$$

\begin{lem}\label{lem-eps-sep}
Let $X$, $Y$ be totally bounded metric spaces,  $\eps>0$, $s(X,\eps) > s(Y,\eps) - \eps$,  and $f\colon X\to Y$ be a non-contractive function. Let $A$, $|A| \ge 2$, be an $\eps$-separated set in $X$ such that
$$
\sigma(A) > s(X,\eps)-\eps.
$$
Then $f(A)$ is a maximal $\eps$-separated set in $Y$.
\end{lem}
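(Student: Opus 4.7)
The plan is to argue by contradiction, using the straightforward observation that appending one new point to an $\eps$-separated set of size at least two must boost $\sigma$ by at least $2\eps$.

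First, I would unpack what non-contractivity gives us on $A$. Since $A$ is $\eps$-separated, any two distinct $a,b\in A$ satisfy $d(f(a),f(b))\ge d(a,b)\ge \eps$; in particular $f|_{A}$ is injective, so $|f(A)|=|A|$ and $f(A)$ is itself an $\eps$-separated set in $Y$. Summing over (unordered) pairs of distinct elements, this immediately yields
\[
\sigma(f(A)) \;\ge\; \sigma(A) \;>\; s(X,\eps) - \eps.
\]

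Now suppose, for contradiction, that $f(A)$ is not maximal $\eps$-separated. Then there exists a point $y\in Y\setminus f(A)$ with $d(y,f(a))\ge \eps$ for every $a\in A$, so $f(A)\cup\{y\}$ is a (nonempty, finite) $\eps$-separated subset of $Y$. Adding $y$ introduces exactly $|A|$ new pairs, each contributing at least $\eps$, hence
\[
\sigma\bigl(f(A)\cup\{y\}\bigr) \;=\; \sigma(f(A)) + \sum_{a\in A} d(y,f(a)) \;\ge\; \sigma(f(A)) + |A|\,\eps.
\]

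Chaining everything together via the hypotheses $\sigma(A)>s(X,\eps)-\eps$, $s(X,\eps)>s(Y,\eps)-\eps$, and $|A|\ge 2$ gives
\[
\sigma\bigl(f(A)\cup\{y\}\bigr) \;>\; \bigl(s(Y,\eps)-2\eps\bigr) + 2\eps \;=\; s(Y,\eps),
\]
contradicting the definition of $s(Y,\eps)$ as a supremum over finite $\eps$-separated subsets.

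There is no real obstacle here: the lemma is essentially a one-line counting argument. The only delicate point is to notice that the total slack of $2\eps$ coming from the two strict inequalities in the hypotheses is exactly matched by the $|A|\eps \ge 2\eps$ bonus produced by inserting one extra point, which is presumably why the constants in the statement are tuned the way they are.
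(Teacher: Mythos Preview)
Your proof is correct and is essentially identical to the paper's own argument: both observe that $f(A)$ is $\eps$-separated by non-contractivity, assume it is not maximal, append one point, and derive $\sigma(\widetilde{A}) \ge \sigma(f(A)) + 2\eps \ge \sigma(A) + 2\eps > s(X,\eps)+\eps > s(Y,\eps)$ for a contradiction. You are slightly more explicit about the injectivity of $f|_A$ and the $|A|\eps$ intermediate bound, but the idea and the chain of inequalities are the same.
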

\begin{proof}
The $\eps$-separateness of $f(A)$ is a simple consequence of the non-contractiveness of $f$. Let us show $f(A)$ is maximal. Suppose it is not. Then we can add at least one element to $f(A)$ and get $\eps$-separated set $\Tilde{A}$. Then one may get the following contradiction
$$
\sigma(\Tilde{A})>\sigma(f(A))+ 2\eps\geq\sigma(A)+ 2\eps>s(X,\eps) +\eps > s(Y,\eps). \qedhere
$$
\end{proof}

\begin{theo}
Let $X$, $Y$ be totally bounded metric spaces such that for every $\eps>0$ there is $\delta \in (0, \eps)$ such that $s(X,\delta) > s(Y,\delta) - \delta > 0$. Then $(X, Y)$ is a strongly plastic pair.
\end{theo}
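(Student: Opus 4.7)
The plan is to argue by contradiction by adapting the strategy of Theorem \ref{sec-pl-tboun-theo1} to the non-contractive setting via Lemma \ref{lem-eps-sep}. Suppose $f\colon X\to Y$ is noncontractive but not an isometric embedding; pick $x_1,x_2\in X$ and set $\eta=d(f(x_1),f(x_2))-d(x_1,x_2)>0$. The main obstacle is that $f$ is not assumed continuous, so one cannot directly approximate $f(x_1)$ and $f(x_2)$ by images of points of a net in $X$. This is neatly sidestepped by the observation that noncontractivity itself converts closeness in $Y$ into closeness in $X$, namely $d(b,x)\leq d(f(b),f(x))$.

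Apply the hypothesis with $\eps=\eta/6$ to obtain $\delta\in(0,\eta/6)$ with $s(X,\delta)>s(Y,\delta)-\delta>0$. Choose a finite $\delta$-separated subset $A\subset X$ with $|A|\geq 2$ and $\sigma(A)>s(X,\delta)-\delta$, and enlarge it, without decreasing $\sigma(A)$, to a maximal $\delta$-separated subset of $X$; it is still finite by total boundedness and is automatically a $\delta$-net in $X$. By Lemma \ref{lem-eps-sep}, $f(A)$ is a maximal $\delta$-separated, hence $\delta$-net, subset of $Y$. Choose $b_1,b_2\in A$ with $d(f(b_i),f(x_i))<\delta$; noncontractivity then yields $d(b_i,x_i)<\delta$, and two triangle inequalities give
\[
d(b_1,b_2)<d(x_1,x_2)+2\delta,\qquad d(f(b_1),f(b_2))>d(f(x_1),f(x_2))-2\delta,
\]
hence $d(f(b_1),f(b_2))-d(b_1,b_2)>\eta-4\delta$.

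Noncontractivity ensures $d(f(a),f(a'))\geq d(a,a')$ for every other pair in $A$, so the single gain on $(b_1,b_2)$ already forces $\sigma(f(A))\geq\sigma(A)+(\eta-4\delta)>s(X,\delta)+\eta-5\delta$. On the other hand, $f(A)$ is $\delta$-separated in $Y$, so $\sigma(f(A))\leq s(Y,\delta)$, which yields $s(Y,\delta)-s(X,\delta)>\eta-5\delta$. Since $\delta<\eta/6$ gives $\eta-5\delta>\delta$, this contradicts the hypothesis $s(X,\delta)>s(Y,\delta)-\delta$, completing the proof.
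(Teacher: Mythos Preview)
Your proof is correct and follows essentially the same route as the paper's: argue by contradiction, pick $\delta<\eta/6$ from the hypothesis, choose a $\delta$-separated set $A$ with $\sigma(A)>s(X,\delta)-\delta$, apply Lemma~\ref{lem-eps-sep} to make $f(A)$ a $\delta$-net in $Y$, pull the approximation back through noncontractivity, and push $\sigma(f(A))$ above $s(Y,\delta)$. The only deviation is your enlargement of $A$ to a \emph{maximal} $\delta$-separated set in $X$; this step is harmless but unnecessary, since Lemma~\ref{lem-eps-sep} already yields maximality of $f(A)$ in $Y$ from the inequality $\sigma(A)>s(X,\delta)-\delta$ alone, and you never use that $A$ itself is a $\delta$-net in $X$.
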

\begin{proof}
We will argue ``ad absurdum".  Let $f\colon X\to Y$ be a non-contractive function. Suppose  $f$ is not an isometric embedding. Then there are $x_1, x_2\in X$ and $\eps>0$ such that
\begin{equation}
d(f(x_1),f(x_2))>d(x_1,x_2)+\eps.
\end{equation}
Corresponding to the conditions of the theorem there is $\delta \in (0, \frac{\eps}{6})$ such that $s(X,\delta) > s(Y,\delta) - \delta > 0$. Let us consider  a finite $\delta$-separated set in $X$, such that
$$
\sigma(B)>  \max\{s(X,\delta)-\delta, 0\}.
$$
Since $f$ is non-contractive, and $|B| \ge 2$, Lemma \ref{lem-eps-sep} implies that $f(B)$ is a maximal $\delta$-separated set in $Y$. Thus, it is a $\delta$-net in $Y$, so there are $b_1,b_2\in B$ such that
$$d(f(b_1),f(x_1))<\delta \text{ and } d(f(b_2),f(x_2))<\delta.$$
Then, due to non-contractiveness of $f$
$$d(b_1,x_1)<\delta \text{ and } d(b_2,x_2)<\delta.$$
Hence, one can get the following
\begin{align*}
  d(b_1,b_2)&\leq d(b_1,x_1)+d(x_1,x_2)+d(x_2,b_2)< d(x_1,x_2)+2\delta\\
  &<d(f(x_1),f(x_2))-\eps+2\delta\leq d(f(x_1),f(b_1))\\
  &+d(f(b_1),f(b_2))+d(f(b_2),f(x_2))-\eps+2\delta\\
  &<d(f(b_1),f(b_2))-\eps+4\delta.
\end{align*}
That is why we have
$$
\sigma(B)=\sum_{a,b\in B}d(a,b)<\sum_{a,b\in B}d(f(a),f(b))-\eps+4\delta=\sigma(f(B))-\eps+4\delta.
$$
Then
$\sigma(f(B))>\sigma(B)+\eps-4\delta>s(X,\delta)-\delta+\eps-4\delta=s(X,\delta)+\eps-5\delta\geq s(Y,\delta)+\eps-6\delta > s(Y,\delta)$. This leads to the contradiction with definition of $s(Y,\delta)$.
\end{proof}

From the above theorem one easily gets the following strong plasticity analogue of Theorem \ref{sec-pl-tboun-theo1}.

\begin{cor} \label{sec3-cor-spp}
Let $X$, $Y$ be totally bounded metric spaces such that $s(X,\delta) \ge s(Y,\delta)$ for every $\delta>0$.  Then $(X, Y)$ is a strongly plastic pair.
\end{cor}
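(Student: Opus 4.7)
The plan is to reduce this corollary directly to the preceding theorem, so the only task is to verify that the hypothesis $s(X,\delta) \ge s(Y,\delta)$ for all $\delta > 0$ implies the apparently stronger hypothesis of the theorem, namely: for every $\eps > 0$ there exists $\delta \in (0,\eps)$ with $s(X,\delta) > s(Y,\delta) - \delta > 0$. The first inequality $s(X,\delta) > s(Y,\delta) - \delta$ is immediate from the assumed inequality $s(X,\delta) \ge s(Y,\delta)$ together with $\delta > 0$, so all that really needs to be proved is the strict positivity $s(Y,\delta) - \delta > 0$, i.e.\ $s(Y,\delta) > \delta$, for arbitrarily small $\delta > 0$.

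I would first dispose of the trivial case $|Y| \le 1$ separately: if $Y$ has at most one point, then any noncontractive $f \colon X \to Y$ forces $d(x,x') = 0$ for all $x,x' \in X$, hence $|X| \le 1$, and $f$ is automatically an isometric embedding, so $(X,Y)$ is vacuously strongly plastic. Assuming now $|Y| \ge 2$, pick any two distinct points $y_1, y_2 \in Y$ and set $r := d(y_1, y_2) > 0$. For any $\delta \in (0, \min(\eps, r))$, the two-element set $B = \{y_1, y_2\}$ is $\delta$-separated in $Y$, so
$$
s(Y, \delta) \ge \sigma(B) = 2r > 2\delta > \delta,
$$
which gives the desired $s(Y,\delta) - \delta > 0$. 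Combined with $s(X,\delta) \ge s(Y,\delta) > s(Y,\delta) - \delta$, this verifies the hypothesis of the previous theorem for this $\delta \in (0,\eps)$.

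Applying the preceding theorem then yields the strong plasticity of the pair $(X, Y)$, which completes the proof. There is no substantive obstacle here: the corollary is essentially a bookkeeping consequence of the theorem, and the only minor care needed is the separate handling of the degenerate case $|Y| \le 1$, where the quantity $s(Y,\delta)$ collapses to $0$ and the ``$> 0$" clause of the theorem's hypothesis cannot be arranged.
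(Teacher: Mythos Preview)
Your proof is correct and follows essentially the same route as the paper's: both dispose of the degenerate case $|Y|\le 1$ directly, and for $|Y|\ge 2$ both observe that any two distinct points of $Y$ form a $\delta$-separated set for small $\delta$, forcing $s(Y,\delta)>\delta$ and hence the hypothesis of the preceding theorem. The paper phrases this as $s(Y,\delta)\ge\diam Y$ for $\delta\in(0,\diam Y)$, while you fix a specific pair $y_1,y_2$ with $r=d(y_1,y_2)$, but these are the same idea. (One cosmetic point: under the paper's convention for $\sigma$, which counts unordered pairs, $\sigma(\{y_1,y_2\})=r$ rather than $2r$; this does not affect your inequality $s(Y,\delta)>\delta$.)
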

\begin{proof}
At first, if $Y$ consists of one element, then a non-contractive function $f\colon X\to Y$ may exist only if $X$ consists of one element as well, and the result is plain. If $|Y| \ge 2$, then $s(Y,\delta) \ge \diam Y$ for every $\delta \in (0, \diam Y)$. Consequently, for every $\eps>0$ every $\delta \in (0, \min\{\eps, \diam Y\})$ satisfies the desired condition $s(X,\delta) > s(Y,\delta) - \delta > 0$.
\end{proof}

Remark, that the mentioned in the introduction classical result about strong plasticity of all totally bounded metric spaces is a particular case of Corollary \ref{sec3-cor-spp}.

It remains to address the uniform strong plasticity of pairs of spaces, which happens to be the most difficult technically piece of work.

For a totally bounded metric space $X$ let us introduce the following notation:
$$
N_{\eps}=N(X,\eps)=\max\{|A|, \text{ where } A  \text{ is } \eps \text{-separated set in } X\}
$$

\begin{lem} \label{sec3-lem2-unif}
Let $X$, $Y$ be totally bounded metric spaces,
and $\eps > 0$ be such that there are $\delta = \delta(\eps) < \min\left\{ \frac{\eps}{N_{\frac{\eps}{9}}(N_{\frac{\eps}{9}}-1)-6}, \frac{\eps}{9(N_{\frac{\eps}{9}}+1)}\right\}$, and $\nu<\frac{\eps}{18}$ with
\begin{equation}\label{eq-red-1}
s\left(X,\frac{\eps}{9}\right)\geq s\left(Y,\frac{\eps}{9}-\delta\right)-\nu > 0.
\end{equation}
Let $f\colon X\to Y$ be a function. Then, if there are $x,y\in X$ with $d(f(x),f(y))\geq d(x,y)+\eps$, then there are $\widetilde{x}, \widetilde{y}\in X$ with $d(f(\widetilde{x}),f(\widetilde{y}))\leq d(\widetilde{x},\widetilde{y})-\delta$.
\end{lem}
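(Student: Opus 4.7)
\medskip

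The plan is to adapt the strategy of the preceding theorem and argue by contradiction, assuming that $d(f(a),f(b)) > d(a,b) - \delta$ for every pair $a,b \in X$ (so $f$ is ``almost non-contractive with slack $\delta$''), while exploiting the reserve $\nu$ in \eqref{eq-red-1}. First I would pick a finite $(\eps/9)$-separated set $B \subset X$ with $\sigma(B) > s(X, \eps/9) - \nu$ (which is positive by \eqref{eq-red-1}), and set $n := |B| \le N_{\eps/9}$. Since $\delta < \eps/9$, the image $f(B)$ is automatically $(\eps/9 - \delta)$-separated.

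The next step, in analogy with Lemma \ref{lem-eps-sep}, is to show that $f(B)$ is in fact a \emph{maximal} $(\eps/9 - \delta)$-separated subset of $Y$, hence an $(\eps/9-\delta)$-net in $Y$. If some $y^* \in Y$ were at distance $\ge \eps/9 - \delta$ from every point of $f(B)$, then $\tilde A := f(B) \cup \{y^*\}$ would satisfy
\[
\sigma(\tilde A) \;\ge\; \sigma(f(B)) + 2n\bigl(\eps/9 - \delta\bigr) \;\ge\; \sigma(B) + 2n\eps/9 - \delta\, n(n+1),
\]
using $\sigma(f(B)) \ge \sigma(B) - \delta\, n(n-1)$. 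Combined with $\sigma(\tilde A) \le s(Y, \eps/9 - \delta) \le s(X, \eps/9) + \nu < \sigma(B) + 2\nu$, this forces $\delta\, n(n+1) + 2\nu > 2n\eps/9$, which is precluded by $\nu < \eps/18$ and $\delta < \eps/(9(N_{\eps/9}+1))$.

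Knowing $f(B)$ is such a net, I pick $b_1,b_2 \in B$ satisfying $d(f(x),f(b_1)),\, d(f(y),f(b_2)) < \eps/9 - \delta$; the contradiction assumption then gives $d(x,b_1),\, d(y,b_2) < \eps/9$. Two triangle inequalities transfer the strict expansion on $(x,y)$ to the pair $(b_1,b_2)$, producing $d(f(b_1),f(b_2)) > d(x,y) + 7\eps/9 + 2\delta > d(b_1,b_2) + 5\eps/9 + 2\delta$. Summing this single expanded pair (counted twice in $\sigma$) against the bound $d(f(a),f(b)) > d(a,b) - \delta$ on the remaining $n(n-1)-2$ ordered pairs yields
\[
\sigma(f(B)) \;>\; \sigma(B) + 10\eps/9 + 6\delta - \delta\, n(n-1),
\]
and then $\sigma(B) > s(Y, \eps/9 - \delta) - 2\nu$ gives $\sigma(f(B)) > s(Y, \eps/9 - \delta) + 10\eps/9 - 2\nu - \delta(n(n-1) - 6)$. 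The hypotheses $\nu < \eps/18$ and $\delta(N_{\eps/9}(N_{\eps/9}-1) - 6) < \eps$ close this gap, producing $\sigma(f(B)) > s(Y, \eps/9 - \delta)$, in contradiction with the $(\eps/9 - \delta)$-separateness of $f(B)$.

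The main obstacle I anticipate is the bookkeeping: the almost-non-contractiveness must be spent \emph{twice}, first to keep $f(B)$ a maximal $(\eps/9-\delta)$-separated set, and second to propagate the local expansion from the pair $(x,y)$ to a pair already in $B$. The two explicit bounds on $\delta$ in the statement are exactly the thresholds that let each of these two applications go through, while the fraction $\eps/9$ in \eqref{eq-red-1} is calibrated to leave a positive residue ($5\eps/9$) after triangle-inequality losses of $2\eps/9$ on each side; any other choice would only shift, not eliminate, the need for the two separate bounds on $\delta$.
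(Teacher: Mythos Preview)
Your proof is correct and follows essentially the same route as the paper's: argue by contradiction, pick a near-optimal $(\eps/9)$-separated set $B$, show $f(B)$ is a maximal $(\eps/9-\delta)$-separated set in $Y$ (hence an $(\eps/9-\delta)$-net), transfer the expansion from $(x,y)$ to a pair in $B$ via two triangle inequalities, and sum to force $\sigma(f(B)) > s(Y,\eps/9-\delta)$. The only cosmetic differences are your ordered-pair convention for $\sigma$ and your slack $\nu$ in choosing $B$ (the paper uses $\mu=\eps/18-\nu$ and takes $\max$ with $0$); note that your parenthetical ``positive by \eqref{eq-red-1}'' is not literally what \eqref{eq-red-1} gives, but positivity does follow once the expanding pair forces $s(Y,\eps/9-\delta)\ge \eps$, so $s(X,\eps/9)-\nu \ge \eps - 2\nu > 0$.
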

\begin{proof}
We will argue by contradiction. Suppose there are $x, y\in X$ such that
\begin{equation}
d(f(x),f(y))\geq d(x,y)+\eps,
\end{equation}
and for every  $\Tilde{x},\Tilde{y}\in X$
\begin{equation}\label{ineq2}
d(f(\Tilde{x}),f(\Tilde{y}))>d(\Tilde{x},\Tilde{y})-\delta.
\end{equation}
There exists a finite $\frac{\eps}{9}$-separated set $A$ in $X$ such that
\begin{equation}\label{eq-red-2}
\sigma(A)>  \max\left\{s\left(X,\frac{\eps}{9}\right)-\mu, 0\right\},
\end{equation}
where $\mu=\frac{\eps}{18}-\nu$.

{\bf Claim.} $f(A)$ is a maximal $(\frac{\eps}{9}-\delta)$-separated set in $Y$.\\
{\it Proof of the Claim.}
Due to our suggestion \eqref{ineq2}, the set $f(A)$ is $(\frac{\eps}{9}-\delta)$-separated. The non-trivial part is to show its maximality. Suppose $f(A)$ is not maximal. Then we can add to $f(A)$ at least one element and get $(\frac{\eps}{9}-\delta)$-separated set $\Tilde{A} \subset Y$. Let us denote by $n$ the quantity of elements in $f(A)$. We have $\frac{\eps}{9}> \delta$ and thus $n = |A|$, since $f$ can not ``glue" the elements of $A$, or else we will get the contradiction with condition \eqref{ineq2}. Observe, $|A| \leq N_{\frac{\eps}{9}}$ and inequality \eqref{eq-red-2} implies that $|A|\geq 2$, so $n \in \left[2, N_{\frac{\eps}{9}}\right]$.
Thus one may get the following chain of inequalities
\begin{align*}
\sigma(\Tilde{A})&\geq\sigma(f(A))+\left(\frac{\eps}{9}-\delta\right)n>\sigma(A)+\left(\frac{\eps}{9}-\delta\right)n-\delta\cdot\frac{n(n-1)}{2}\\
&=\sigma(A)+\frac{\eps}{9}\cdot n-\delta\cdot\frac{n(n+1)}{2}=\sigma(A)+n\cdot\left(\frac{\eps}{9}-\delta\cdot\frac{(n+1)}{2}\right)\\
&> \sigma(A)+n\cdot\left(\frac{\eps}{9}-\frac{(n+1)\eps}{18(N_{\frac{\eps}{9}}+1)}\right)\geq \sigma(A)+n\cdot\left(\frac{\eps}{9}-\frac{\eps}{18}\right)\\
&= \sigma(A)+\frac{n\eps}{18}>s\left(X,\frac{\eps}{9}\right)-\mu+\frac{n\eps}{18}\geq s\left(X,\frac{\eps}{9}\right)+\frac{\eps}{18}+\nu\\
&\geq s\left(Y,\frac{\eps}{9}-\delta\right)-\nu+\nu+\frac{\eps}{18}= s\left(Y,\frac{\eps}{9}-\delta\right)+\frac{\eps}{18}.
\end{align*}
This contradiction completes the proof of the Claim.\\
Now, one may conclude, that $f(A)$ is ($\frac{\eps}{9}-\delta$)-net in $Y$, so, there are $a_1,a_2\in A$ such that
$$d(f(a_1),f(x))<\frac{\eps}{9}-\delta, \text{ and } d(f(a_2),f(y))<\frac{\eps}{9}-\delta.$$
Then, by \eqref{ineq2},
$$d(a_1,x)<\frac{\eps}{9} \text{ and } d(a_2,y)<\frac{\eps}{9}.$$
Finally, one can get the following chain of inequalities, which will help us to obtain the final contradiction
\begin{align*}
  d(a_1,a_2)&\leq d(a_1,x)+d(x,y)+d(y,a_2) < d(x,y)+\frac{2\eps}{9}\\
  &<d(f(x),f(y))-\eps+\frac{2\eps}{9}= d(f(x),f(y))-\frac{7\eps}{9} \\
  &\leq d(f(x),f(a_1))+d(f(a_1),f(a_2))+d(f(a_2),f(y))-\frac{7\eps}{9}\\
  &<d(f(a_1),f(a_2))-\frac{5\eps}{9}-2\delta.
\end{align*}
Now we have
\begin{align*}
\sigma(A)&=\sum_{a,b\in A}d(a,b)<\sum_{a,b\in A}d(f(a),f(b))-\frac{5\eps}{9}-2\delta+\delta\left(\frac{n(n-1)}{2}-1\right)\\
&<\sigma(f(A))-\frac{5\eps}{9}+\frac{\eps(n(n-1)-6)}{2(N_{\frac{\eps}{9}}(N_{\frac{\eps}{9}}-1)-6)}\leq\sigma(f(A))-\frac{\eps}{18}.
\end{align*}
Then
\begin{align*}\sigma(f(A))&>\sigma(A)+\frac{\eps}{18}>s\left(X,\frac{\eps}{9}\right)-\mu+\frac{\eps}{18}\\
&=s\left(X,\frac{\eps}{9}\right)+\nu \stackrel{\mathrm{\eqref{eq-red-1}}}\ge s\left(Y,\frac{\eps}{9}-\delta\right),
\end{align*}
which contradicts the definition of $s\left(Y,\frac{\eps}{9}-\delta\right)$.
\end{proof}

From the above lemma we deduce the main result of the section.

\begin{theo} \label{sec3-theor3-unif}
Let $X$, $Y$ be totally bounded metric spaces such that $s(X,\delta) \ge s(Y,\delta)$ for every $\delta>0$.  Then $(X, Y)$ is a uniformly strongly plastic pair.
\end{theo}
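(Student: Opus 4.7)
The plan is to deduce Theorem \ref{sec3-theor3-unif} directly from Lemma \ref{sec3-lem2-unif} by, for each $\eps_0 > 0$, producing parameters $\eps, \delta, \nu$ at which the lemma can be invoked. A useful preliminary observation is that $\eps \mapsto \delta_{X,Y}^s(\eps)$ is non-decreasing on $(0,+\infty)$: for $\eps' \le \eps$, the defining hypothesis at $\eps$ is stronger than at $\eps'$, so any admissible $\delta$ for $\eps'$ is admissible for $\eps$. Hence it will suffice to produce some $\eps \in (0, \eps_0]$ with $\delta_{X,Y}^s(\eps) > 0$.

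To apply Lemma \ref{sec3-lem2-unif} at a chosen $\eps$ I must supply a positive $\delta$ and $\nu < \eps/18$ satisfying (a) the smallness bound $\delta < \min\{\eps/(N_{\eps/9}(N_{\eps/9}-1)-6),\, \eps/(9(N_{\eps/9}+1))\}$, and (b) the substantive inequality \eqref{eq-red-1}, $s(X,\eps/9) \ge s(Y,\eps/9-\delta)-\nu > 0$. Combining the theorem's hypothesis $s(X,\eps/9) \ge s(Y,\eps/9)$ with the monotonicity $s(Y,\eps/9-\delta) \ge s(Y,\eps/9)$, condition (b) will follow from the left-jump estimate $s(Y,\eps/9-\delta)-s(Y,\eps/9) < \nu$, a form of near-left-continuity of $s(Y,\cdot)$ at $\eps/9$. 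Since $s(Y,\cdot)$ is a non-increasing real function on $(0,+\infty)$, it has at most countably many discontinuities, so it is left-continuous at $\eps/9$ for every $\eps$ outside a countable exceptional set $E$.

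Given $\eps_0 > 0$, I will pick $\eps \in (0, \eps_0]$ outside $E$ and small enough that both $\eps/9 < \diam Y$ (ensuring $s(Y,\eps/9)>0$) and $N_{\eps/9} \ge 4$ (ensuring the bound in (a) is positive); extremal degenerate cases (e.g.\ $X$ or $Y$ with at most three points, or $\eps$ so large that the defining premise is vacuous) are addressed separately by the finite-space analysis of Section \ref{sec-finite} or by direct inspection. Set $\nu := \eps/36$; using left-continuity of $s(Y,\cdot)$ at $\eps/9$, choose $\delta > 0$ small enough to have simultaneously $s(Y,\eps/9-\delta)-s(Y,\eps/9) < \nu$ and the smallness bound in (a). Lemma \ref{sec3-lem2-unif} then yields, for any $f \colon X \to Y$ satisfying the strict premise of the definition of $\delta_{X,Y}^s$, a pair $\tilde x, \tilde y \in X$ with $d(f(\tilde x),f(\tilde y)) \le d(\tilde x,\tilde y)-\delta$; the strict inequality $d(f(\tilde x),f(\tilde y)) < d(\tilde x,\tilde y)-\delta'$ then holds for any $\delta' \in (0,\delta)$, whence $\delta_{X,Y}^s(\eps) \ge \delta > 0$ and, by monotonicity, $\delta_{X,Y}^s(\eps_0) > 0$.

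The principal subtlety, as anticipated, is that even for a totally bounded $Y$ the function $s(Y,\cdot)$ can genuinely jump at some argument values; this obstructs condition (b) at those exceptional $\eps$, and is circumvented by combining the monotonicity of $\delta_{X,Y}^s$ with the countability of the jump set of $s(Y,\cdot)$ to reduce the problem to $\eps$ where the lemma applies cleanly.
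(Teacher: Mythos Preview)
Your proposal is correct and follows essentially the same route as the paper: both arguments exploit that $s(Y,\cdot)$ is monotone and hence has only countably many discontinuities, pick a smaller parameter at which left-continuity holds, and then feed this into Lemma \ref{sec3-lem2-unif}. Your version is in fact a bit more careful than the paper's, since you explicitly invoke the monotonicity of $\delta_{X,Y}^s$ to transfer the conclusion back to the original $\eps_0$, you note the strict-versus-nonstrict inequality in the lemma's output, and you flag the degenerate cases $N_{\eps/9}\le 3$ (where the bound in Lemma \ref{sec3-lem2-unif} is nonpositive) that the paper passes over in silence.
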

\begin{proof}
Fix  $\eps \in (0, \diam Y)$. Our goal is to prove the existence of $\delta_0 > 0$ with the following property:  for every function $f\colon X\to Y$ if there are $x,y\in X$ with $d(f(x),f(y)) > d(x,y)+\eps$, then there are $\widetilde{x}, \widetilde{y}\in X$ with $d(f(\widetilde{x}),f(\widetilde{y})) < d(\widetilde{x},\widetilde{y})-\delta_0$.

First, let us note that for every $\eps_1 < \eps_2$ one has $s(Y,\eps_2)\leq s(Y,\eps_1)$, since every $\eps_2$-separated set is also $\eps_1$-separated. That is, $s(Y, t)$ is monotone as a function of $t$ and the quantity of its discontinuity points is at most countable.
Thus, we can choose $\eps_0\in (0, \eps)$ in such way that $s(Y, t)$ is continuous at $\frac{\eps_0}{9}$ and consider
$$\nu(\delta)=s\left(Y,\frac{\eps_0}{9}-\delta\right)-s\left(Y,\frac{\eps_0}{9}\right).$$
Then there is $\Delta>0$ such that for every $0<\delta<\Delta$ we have $\nu(\delta)< \frac{\eps_0}{18}$.
Let us choose $\delta_0=\min\left\{\Delta, \frac{\eps_0}{N_{\frac{\eps_0}{9}}\left(N_{\frac{\eps_0}{9}}-1\right)-6},\frac{\eps_0}{9\left(N_{\frac{\eps_0}{9}}+1\right)}\right\}$.
Then for $\eps_0$, $\delta_0$ and $\nu_0=\nu(\delta_0)$ we have the condition
$$
s\left(X, \frac{\eps_0}{9}\right)\geq s\left(Y, \frac{\eps_0}{9}\right)= s\left(Y, \frac{\eps_0}{9}-\delta_0\right)-\nu_0>0,
$$
where the last inequality is guaranteed by condition $\eps_0<\eps<\diam Y$. It remains to apply Lemma \ref{sec3-lem2-unif}.
\end{proof}

The quantitative version of Theorem \ref{sec3-theor3-unif} is Lemma \ref{sec3-lem2-unif}. In some sense it stays not too far from being sharp. This demonstrates the example constructed in Case 1 of Theorem \ref{theo-fin-sharp}. In that example all the pairwise distances between the elements of the space belong to the interval $[a, 2a]$, $a > \eps$, so the space $X$ itself serves as its own finite $\eps$-separated subset consisting of $N = |X|$ elements. So, $s(X, t)$ is constant for $t \le \eps$, and $N_{\frac{\eps}{9}} = N$. In this case for large  $N$ Lemma \ref{sec3-lem2-unif} with $X = Y$ says that every $\delta(\eps) <  \frac{\eps}{N(N-1)-6}$ may be used, but in this example nothing bigger than $ \frac{\eps}{\frac{N}{2}\left(\frac{N}{2}-1\right)}$ can serve as $\delta(\eps)$. So the quotient between the upper and lower estimate for $\delta(\eps)$ is around 4, which is not too bad.

On the other hand, the estimates in the statement of Lemma \ref{sec3-lem2-unif} look ugly, because they relies on the continuity properties of the function $s(Y,\eps)$. It would be nice to have something more elegant in the spirit of Nitka's \eqref{eq-nitka1}.  Of course, it may happen that our generalization with $X \neq Y$ cannot be given in nice terms, similar to those in \eqref{eq-nitka1}. At least, Nitka's proof does not generalize to  $X \neq Y$ because it uses the dynamical system generated by $f: X \to X$ (the sequence of iterations), which makes no sense for $f: X \to Y$.

\subsection{Relationship between strong plasticity and uniform strong plasticity for pairs of totally bounded spaces} \label{sec-precompact-subsection 3} $ $

In Subsection \ref{subsec-finite-applic} we presented an example of strongly plastic  metric space $X$ which was not uniformly strongly plastic. For pairs of different metric spaces such examples can be provided with much less effort, moreover both spaces $X$ and $Y$ in such an example may be chosen to be totally bounded.

\begin{example}  \label{sec-precompact-subsection3-ex1}
Consider the following two subspaces of $\R$ in the standard metric: $X = [0, 1] \cup \{3\}$ and  $Y = [0, 1) \cup \{4\}$. This pair $(X, Y)$ is strongly plastic but not uniformly strongly plastic.
\end{example}
\begin{proof}
1. Strong plasticity. Assume there is a non-contracting map $f: X \to Y$. The point $3 \in X$ has the following property: there are infinitely many points $x \in X$ with $d(3, x) > 2$. Since $f$ is non-contracting, for $f(3) \in Y$ there are also infinitely many points $y \in Y$ with $d(f(3), y) > 2$. The only possibility for this is $f(3) = 4$. Consequently, $f([0, 1]) \subset [0, 1)$, but for a non-contractive map this is impossible, because there should be $d(f(0), f(1)) \ge 1$, but in $[0, 1)$ there are no such points.

2. The absence of uniform strong plasticity is also plain.  Define the functions $f_t: X \to Y$, $t \in (0, 1)$ as follows: $f_t(x) = tx$ for $x \in [0, 1]$, and $f_t(3) = 4$. For these functions and $\eps = 1/2$, $d(f_t(0),f_t(3)) > d(0,3)+\eps$, but for every $\widetilde{x}, \widetilde{y}\in X$ we have $d(f_t(\widetilde{x}),f_t(\widetilde{y})) \ge d(\widetilde{x},\widetilde{y}) - (1 - t)$. So, $\delta_{X, Y}^s(\eps) \le 1-t$ for all $t \in (0, 1)$, that is $\delta_{X, Y}^s(\eps) = 0$.
\end{proof}

For pairs of compact spaces, the situation is different.

\begin{theo} \label{sec3-theor-cpimp-unif}
Let $X$, $Y$ be compact metric spaces such that  $(X, Y)$ is a strongly plastic pair.  Then $(X, Y)$ is a uniformly strongly plastic pair.
\end{theo}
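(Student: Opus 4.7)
The plan is to argue by contradiction: extract a limiting ``multifunction'' from a sequence of almost-noncontractive maps witnessing the failure of uniformity, then play that multifunction against strong plasticity of $(X,Y)$. Suppose $\delta_{X,Y}^{s}(\eps_{0})=0$ for some $\eps_{0}>0$. Then for each $n\in\N$ there exist $f_{n}\colon X\to Y$ and $x_{n},y_{n}\in X$ with $d(f_{n}(x_{n}),f_{n}(y_{n}))>d(x_{n},y_{n})+\eps_{0}$ yet $d(f_{n}(a),f_{n}(b))\ge d(a,b)-1/n$ for every $a,b\in X$. Since the $f_{n}$ need not be continuous, I would work with the closed graphs $\Gam_{n}=\overline{\{(x,f_{n}(x)):x\in X\}}\subset X\times Y$. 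As $X\times Y$ is compact, Blaschke's selection theorem provides a subsequence along which $\Gam_{n}$ converges in the Hausdorff metric to some closed $\Gam_{\infty}\subset X\times Y$. Passing to a further subsequence, also arrange $x_{n}\to x_{*}$, $y_{n}\to y_{*}$ in $X$ and $f_{n}(x_{n})\to p$, $f_{n}(y_{n})\to q$ in $Y$.

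Three properties of $\Gam_{\infty}$ need to be verified by standard Hausdorff-limit arguments: (i) $\Gam_{\infty}$ is a \emph{noncontractive correspondence}, meaning $(u_{1},v_{1}),(u_{2},v_{2})\in\Gam_{\infty}\Rightarrow d(v_{1},v_{2})\ge d(u_{1},u_{2})$, obtained by letting $n\to\infty$ in the $1/n$-weakened inequality inherited by each $\Gam_{n}$; (ii) $\pi_{X}(\Gam_{\infty})=X$, since each $\pi_{X}(\Gam_{n})=X$ and both factors are compact; (iii) $(x_{*},p),(y_{*},q)\in\Gam_{\infty}$ with $d(p,q)\ge d(x_{*},y_{*})+\eps_{0}$. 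If $x_{*}\neq y_{*}$, the axiom of choice produces a selector $\tilde f\colon X\to Y$ whose graph sits in $\Gam_{\infty}$ and satisfies $\tilde f(x_{*})=p$, $\tilde f(y_{*})=q$; property~(i) makes $\tilde f$ noncontractive, property~(iii) makes it strictly expand $(x_{*},y_{*})$, so $\tilde f$ is not an isometric embedding, contradicting strong plasticity of $(X,Y)$.

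The main obstacle is the degenerate case $x_{*}=y_{*}$, where the limiting expansion collapses onto a single input and no single selector visibly breaks isometry. To handle it, I exploit the multi-valued flexibility of $\Gam_{\infty}$ by performing ``surgery'' at $x_{*}$. First take any selector $\tilde f$ from $\Gam_{\infty}$ with $\tilde f(x_{*})=p$; by strong plasticity it is an isometric embedding. Then define $\tilde f'$ by $\tilde f'(x_{*})=q$ and $\tilde f'(z)=\tilde f(z)$ otherwise; property~(i) applied to the pair $(z,\tilde f(z)),(x_{*},q)\in\Gam_{\infty}$ shows $\tilde f'$ is still noncontractive, so it too is an isometric embedding. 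Comparing the two isometric identities $d(\tilde f(z),p)=d(z,x_{*})$ and $d(\tilde f(z),q)=d(z,x_{*})$ and applying the triangle inequality to $p,\tilde f(z),q$ yields $d(z,x_{*})\ge d(p,q)/2\ge\eps_{0}/2$ for every $z\ne x_{*}$. Thus $x_{*}$ is isolated in $X$; but then $x_{n}\to x_{*}$ and $y_{n}\to x_{*}$ force $x_{n}=y_{n}=x_{*}$ for large $n$, whence $d(f_{n}(x_{n}),f_{n}(y_{n}))=0$, contradicting the hypothesis $d(f_{n}(x_{n}),f_{n}(y_{n}))>\eps_{0}>0$.

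The hard part is precisely this collapse of witnesses; the remaining steps — Blaschke selection, preservation of the noncontractive-correspondence inequality in the Hausdorff limit, and surjectivity of $\pi_{X}$ on $\Gam_{\infty}$ — are routine once one agrees to pass to closures rather than to the raw graphs and uses that both $X$ and $Y$ are compact (compactness of $Y$ is exactly what fails in Example~\ref{sec-precompact-subsection3-ex1}, which also prevents the sequence $f_{n}(x_{n})$ from producing a limit $p\in Y$).
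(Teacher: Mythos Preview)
Your argument is correct and shares the same skeleton as the paper's proof: argue by contradiction, extract a sequence of $\frac{1}{n}$-almost-noncontractive maps $f_n$ with witnesses $x_n,y_n$, pass to a limiting object, split into the cases $x_*\neq y_*$ and $x_*=y_*$, and in the degenerate case produce two noncontractive (hence isometric) maps that agree off $x_*$ but send $x_*$ to $p$ and to $q$ respectively. The technical implementations differ. The paper takes ultrafilter pointwise limits of explicitly ``swapped'' functions $g_n,h_n,w_n$ (interchanging the values of $f_n$ at $x_n$ and $x$, etc.) to obtain the limit maps directly; you instead pass to the Hausdorff limit $\Gamma_\infty$ of the closed graphs and pull selectors out of the resulting noncontractive correspondence. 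Your handling of the collapse $x_*=y_*$ is also slightly different: the paper argues that the two limit isometries must actually agree at $x_*$ (by continuity if $x_*$ is not isolated, and by the eventual equality $x_n=y_n=x_*$ if it is), contradicting $u\neq v$; you instead use the triangle inequality $d(p,q)\le d(p,\tilde f(z))+d(\tilde f(z),q)=2d(z,x_*)$ to force $x_*$ to be isolated outright and then derive the same eventual equality $x_n=y_n=x_*$. Both routes work; your correspondence viewpoint is a bit more conceptual and avoids the ad hoc swapping, while the paper's version keeps everything at the level of single-valued functions throughout.
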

\begin{proof}
Assume to the contrary that $(X, Y)$ is not a uniformly strongly plastic pair. Fix the corresponding  $\eps > 0$ for which $\delta_{X, Y}^s(\eps) = 0$. Then there are functions  $f_n: X \to Y$ and elements $x_n, y_n \in X$, $n = 1, 2, \ldots$ such that
\beq \label{sec3-theor-cpimp-unif-eq0}
d(f_n(x_n), f_n(y_n)) > d(x_n, y_n) + \eps,
\eeq
and for every $\widetilde{x}, \widetilde{y}\in X$
\beq \label{sec3-theor-cpimp-unif-eq1}
d(f_n(\widetilde{x}), f_n(\widetilde{y})) > d(\widetilde{x}, \widetilde{y}) - \frac{1}{n}.
\eeq
Passing, if necessary, to a subsequence, we may additionally assume that the sequences $(x_n), (y_n)$ in $X$ and  $(f_n(x_n)), (f_n(y_n))$ in $Y$ have limits.  Denote $x = \lim_{n \to \infty} x_n$, $y = \lim_{n \to \infty} y_n$, $u = \lim_{n \to \infty} f_n(x_n)$, $v = \lim_{n \to \infty} f_n(y_n)$, and
$\delta_n = d(x_n, x) + d(y_n, y)$. From \eqref{sec3-theor-cpimp-unif-eq0} we have

\beq \label{sec3-theor-cpimp-unif-eq00}
d(u, v) \ge d(x, y) + \eps,
\eeq

Next, we need to consider separately two cases.

\textbf{Case 1: $x = y$.} In this case let us introduce two modified versions of $f_n$: $g_n$ and $h_n$ as follows:
$$
g_n(z)=\begin{cases}
         f_n(z), & \mbox{if } z \notin \{x_n, x\} \\
         f_n(x), & \mbox{if } z = x_n \\
         f_n(x_n), & \mbox{if } z = x,
       \end{cases}
$$
and analogously
$$
h_n(z)=\begin{cases}
         f_n(z), & \mbox{if } z \notin \{y_n, x\} \\
         f_n(x), & \mbox{if } z = y_n \\
         f_n(y_n), & \mbox{if } z = x.
       \end{cases}
$$
By the triangle inequality, \eqref{sec3-theor-cpimp-unif-eq1} implies that  for every $\widetilde{x}, \widetilde{y}\in X$
\beq \label{sec3-theor-cpimp-unif-eq11}
d(g_n(\widetilde{x}), g_n(\widetilde{y})) > d(\widetilde{x}, \widetilde{y}) - \frac{1}{n} - \delta_n, \textrm{ and}
\eeq
\beq \label{sec3-theor-cpimp-unif-eq111}
d(h_n(\widetilde{x}), h_n(\widetilde{y})) > d(\widetilde{x}, \widetilde{y}) - \frac{1}{n} - \delta_n.
\eeq

Fix a free ultrafilter $\F$ on $\N$ and consider the $\F$-pointwise limits $g, h$ of $(g_n)$ and $(h_n)$ respectively:
$$
g(z) = \lim_\F g_n(z), \quad h(z) = \lim_\F h_n(z).
$$
(For a brief introduction to filters, ultrafilters and compactness we refer to \cite[Section 16.1]{Kad2018}).
The conditions \eqref{sec3-theor-cpimp-unif-eq11} and \eqref{sec3-theor-cpimp-unif-eq111} imply that $g$ and $h$ are non-contractive, so by the strong plasticity of $(X,Y)$,  $g$ and $h$ are isomeric embeddings. By the construction, for $z \neq x$ we have
\beq \label{sec3-theor-cpimp-unif-eqsmall}
g(z) = h(z) = \lim_\F f_n(z).
\eeq
 This implies that $g(x) = h(x)$. Indeed, if $x$ is not an isolated point, then the needed condition is just the limiting case of \eqref{sec3-theor-cpimp-unif-eqsmall} as $z \to x$; and if $x$ is  an isolated point, then all $x_n$ and $y_n$, up to finitely many of them, are equal to $x$, so $g_n(x) = h_n(x)$ for all but finitely many values of $n \in \N$.

On the other hand, $g(x) = \lim_\F f_n(x_n) = u$, $h(x) = \lim_\F f_n(y_n) = v$  and $u \neq v$ thanks to \eqref{sec3-theor-cpimp-unif-eq00}.

\textbf{Case 2: $x \neq y$.}  In this case, without loss of generality, we may assume that also $x_n \neq y_n$ for each $n \in \N$. This enables us to combine the formulas for $g_n$ and $h_n$ and define functions $w_n: X \to Y$
$$
w_n(z)=\begin{cases}
         f_n(z), & \mbox{if } z \notin \{x_n, x, y_n, y\} \\
         f_n(x), & \mbox{if } z = x_n \\
         f_n(x_n), & \mbox{if } z = x \\
         f_n(y), & \mbox{if } z = y_n \\
         f_n(y_n), & \mbox{if } z = y.
       \end{cases}
$$
Again, fix a free ultrafilter $\F$ on $\N$ and consider the limiting function
$$
w(z) = \lim_\F w_n(z).
$$
Once more, by the triangle inequality, \eqref{sec3-theor-cpimp-unif-eq1} implies that  for all $\widetilde{x}, \widetilde{y}\in X$
$$
d(w_n(\widetilde{x}), w_n(\widetilde{y})) > d(\widetilde{x}, \widetilde{y}) - \frac{1}{n} - \delta_n,
$$
so $w$ is non-contractive. By the strong plasticity of $(X,Y)$, this means that $w$ is an isomeric embedding. On the other hand,
$$
d(w(x), w(y)) =  \lim_\F d(f_n(x_n), f_n(y_n)) \stackrel{\mathrm{\eqref{sec3-theor-cpimp-unif-eq0}}}\ge  \lim_\F d(x_n, y_n) + \eps = d(x, y) + \eps,
$$
which gives the desired contradiction.
\end{proof}

The following question remains open for us.

\begin{prob} \label{prob-sec3-cpimp-unif}
Do there exist metric spaces $X$, $Y$ such that $X$ is totally bounded, $Y$ is compact, and the corresponding pair $(X, Y)$ is strongly plastic but not uniformly strongly plastic?
\end{prob}

\section*{Acknowledgements}
The authors are grateful to Rainis Haller and Nikita Leo for useful discussions on the subject.

\bibliographystyle{amsplain}
\end{document}